\documentclass[12pt, reqno]{amsart}

\usepackage{amsmath, amsthm, amscd, amsfonts, amssymb, graphicx, color}
\usepackage[bookmarksnumbered, colorlinks, plainpages]{hyperref}
\input{mathrsfs.sty}
\hypersetup{colorlinks=true,linkcolor=red, anchorcolor=green,
citecolor=cyan, urlcolor=red, filecolor=magenta, pdftoolbar=true}

\textheight 23truecm \textwidth 15.5truecm
\setlength{\oddsidemargin}{0.05in}\setlength{\evensidemargin}{0.05in}

\setlength{\topmargin}{-.5cm}

\newtheorem{theorem}{Theorem}[section]
\newtheorem{lemma}[theorem]{Lemma}

\newtheorem{corollary}[theorem]{Corollary}
\theoremstyle{definition}
\newtheorem{definition}[theorem]{Definition}
\newtheorem{example}[theorem]{Example}

\theoremstyle{remark}
\newtheorem{remark}[theorem]{Remark}
\numberwithin{equation}{section}

\begin{document}

\title[Approximately angle preserving mappings]{Approximately angle preserving mappings}

\author{Mohammad Sal Moslehian$^*$}
\address{Department of Pure Mathematics, Ferdowsi University of Mashhad, P.O. Box 1159, Mashhad 91775, Iran}
\email{moslehian@um.ac.ir}

\author{Ali Zamani}
\address{Department of Mathematics, Farhangian University, Tehran, Iran}
\email{zamani.ali85@yahoo.com}

\author{Pawe\l{}  W\'ojcik}
\address{Institute of Mathematics, Pedagogical University of Cracow, Podchor\c a\.zych 2, 30-084 Krak\'ow, Poland.}
\email{pawel.wojcik@up.krakow.pl}

\subjclass[2010]{47B49, 46C05, 47L05, 39B82.}
\keywords{	.}

\begin{abstract}
In this paper, we present some characterizations of linear mappings, which preserve vectors at a specific angle. We introduce the concept of $(\varepsilon, c)$-angle preserving mappings for $|c|<1$ and $0\leq \varepsilon < 1 + |c|$. In addition, we define $\widehat{\varepsilon}\,(T, c)$ as the ``smallest'' number $\varepsilon$ for which $T$ is $(\varepsilon, c)$-angle preserving mapping.
We state some properties of the function $\widehat{\varepsilon}\,(., c)$, and then propose an exact formula for $\widehat{\varepsilon}\,(T, c)$ in terms of the norm $\|T\|$ and the minimum modulus $[T]$ of $T$. Finally, we characterize the approximately angle preserving mappings. 
\end{abstract} \maketitle

\section{Introduction}
Throughout this paper, let $\mathscr{H}, \mathscr{K}$ denote real Hilbert spaces with dimensions
greater than or equal to $2$ and let $\mathbb{B}(\mathscr{H}, \mathscr{K})$ denote the Banach space
of all bounded linear mappings between Hilbert spaces $\mathscr{H}$ and $\mathscr{K}$.
We write $\mathbb{B}(\mathscr{H})$ for $\mathbb{B}(\mathscr{H}, \mathscr{H})$.

As usual, vectors $x, y\in \mathscr{H}$ are said to be orthogonal, $x \perp y$,
if $\langle x, y\rangle = 0$, where $\langle ., .\rangle$ denotes the inner product of $\mathscr{H}$.
A mapping $T: \,\mathscr{H}\longrightarrow \mathscr{K}$ is
called orthogonality preserving if it preserves orthogonality, that is
$$x \perp y \,\Longrightarrow \, Tx \perp Ty \qquad(x, y\in \mathscr{H}).$$
It is known that orthogonality preserving mappings may be nonlinear and discontinuous; cf. \cite{Ch.1}.
Under the additional assumption of linearity, a mapping $T$ is orthogonality preserving
if and only if it is a scalar multiple of an isometry, that is $T = \gamma U$,
where $U$ is an isometry and $\gamma\geq 0$; see \cite{Ko}.

It is natural to consider approximate orthogonality ($\varepsilon$-orthogonality)
$x \perp ^{\varepsilon} y$ defined by $|\langle x, y\rangle|\leq \varepsilon \|x\|\,\|y\|.$
For $\varepsilon\geq 1$, it is clear that every pair of vectors are $\varepsilon$-orthogonal,
so the interesting case is when $\varepsilon\in[0, 1)$.

One can consider the class of approximately orthogonality preserving mappings.
A mapping $T: \,\mathscr{H}\longrightarrow \mathscr{K}$
is said to be approximately orthogonality preserving mapping,
or $\varepsilon$-orthogonality preserving mapping, if
$$x \perp y \,\Longrightarrow \,Tx \perp^{\varepsilon} Ty \qquad (x, y \in \mathscr{H}).$$
Obviously, if $\varepsilon = 0$, then $T$ is orthogonality preserving.
Hence, the natural question is whether an $\varepsilon$-orthogonality preserving linear mapping
$T$ must be close to a linear orthogonality preserving mapping, cf. \cite{C.L, C.L.W, Tu}.

In a Hilbert space $\mathscr{H}$ we define a relation connected to the notion of angle.
Fix $c\in (-1, 1)$. For $x, y\in \mathscr{H}$ we say $x\,\angle_c\,y$ if
$\langle x, y\rangle = c\,\|x\|\,\|y\|$. So that $c= \cos (\alpha)$ where $\alpha$
is the angle between $x$ and $y$ if $x, y \in \mathscr{H}\setminus\{0\}$.

A mapping $T: \,\mathscr{H}\longrightarrow \mathscr{K}$ is
called $c$-angle preserving if it preserves angle, that is
$$x\,\angle_c\,y \,\Longrightarrow \, Tx\,\angle_c\, Ty \qquad(x, y\in \mathscr{H}).$$

Angle preserving mappings may be very far from linear and continuous mappings.
Actually, there exist an (infinite-dimensional) Euclidean space $\mathscr{H}$
and an injective map $T: \,\mathscr{H}\longrightarrow \mathscr{H}$ such that the condition
$x\,\angle_\frac{1}{2}\,y$ implies that $Tx\,\angle_\frac{1}{2}\, Ty$,
while the map $T$ is discontinuous at all points (see \cite[Remark 3]{Ku}).

A characterization of angle preserving mappings on finite-dimensional Euclidean spaces
was obtained in \cite{Ku}. Further, Chmieli\'{n}ski \cite{Ch.2} studied stability of angle
preserving mappings on the plane. Recently, angle preserving mappings have been studied
in \cite{Wo.2, Wo.3, M.Z.F}.

In the next section, we will present some characterizations of linear mappings preserving some angles.
We will show (Theorem \ref{th.00}) that a nonzero linear map $T$ is $c$-angle preserving
if and only if $T$ is a scalar multiple of an isometry. In fact, this result is a generalization
of \cite[Theorem 1]{Ch.1} and \cite[Theorem 3.8]{M.Z.F}.

Let us fix $\varepsilon\in[0, 1)$ and define $x\,\angle_c^\varepsilon\,y$ as
$$\Big|\langle x, y\rangle - c\,\|x\|\,\|y\|\Big|\leq \varepsilon \|x\|\,\|y\|,$$
which is equivalent to $c-\varepsilon \leqslant \cos \alpha \leqslant c+\varepsilon$,
where $\alpha$ is the angle between $x$ and $y$.

Obviously, if $ c = 0$, then $\angle_0 = \perp $ and $\angle_0^\varepsilon = \perp^{\varepsilon} $.
Moreover, it is easy to see that $\angle_c$ and $\angle_c^\varepsilon$ are weakly homogeneous in
the sense that $x\,\angle_c \,y\, \Leftrightarrow \, \alpha x\,\angle_c \, \beta y$ and
$x\,\angle_c^\varepsilon\, y\, \Leftrightarrow \, \alpha x\,\angle_c^\varepsilon\, \beta y$
for all $\alpha, \beta\in\mathbb{R^{+}}$.

Also, for $\varepsilon\geq 1 + |c|$, it is obvious that $x\,\angle_c^\varepsilon\,y$ for all
$x, y\in \mathscr{H}$. Hence, we shall only consider the case $\varepsilon\in[0, 1 + |c|)$.

A mapping $T: \,\mathscr{H}\longrightarrow \mathscr{K}$ satisfying the condition
$$x \,\angle_c \,y \,\Longrightarrow \,Tx \,\angle_c^\varepsilon\, Ty \qquad (x, y \in \mathscr{H})$$
is called an $\varepsilon$-approximately $c$-angle preserving mapping, or $(\varepsilon, c)$-angle preserving mapping.

Recently, angle preserving mappings have been studied in \cite{Wo.2, Wo.3} via an approach different from ours.
When $\mathscr{H}, \mathscr{K}$ are finite-dimensional, the third author \cite{Wo.2} proved that for an arbitrary
$\delta>0$ there exists $\varepsilon>0$ such that for any linear $(\varepsilon, c)$-angle preserving mapping
$T$ there exists a linear $c$-angle preserving mapping such that
$$\|T - S\| \leq \delta \min\{\|T\|, \|S\|\}.$$

Our next intention is to obtain a characterization of the approximately preserving angle mappings.
Notice that if $0\leq \varepsilon_1 \leq \varepsilon_2 < 1 + |c|$ and $T$ is
$(\varepsilon_1, c)$-angle preserving mapping, then $T$ is $(\varepsilon_2, c)$-angle preserving mapping as well.
This fact motivates us to give the following definition; see also \cite{Z.C.H.K}.
\begin{definition}\label{df.2}
Let $c\in (-1,1)$. For each map $T: \,\mathscr{H}\longrightarrow \mathscr{K}$, let $\widehat{\varepsilon}\,(T, c)$
be the ``smallest'' number $\varepsilon$ such that $T$ is $(\varepsilon, c)$-angle preserving. Hence
\begin{eqnarray*}
\widehat{\varepsilon}\,(T, c) := \inf\Big\{\varepsilon \in[0, 1 + |c|]: \,T
\mbox{~is~ $(\varepsilon, c)$-angle~ preserving~mapping}\Big\}.
\end{eqnarray*}
\end{definition}
Thus $\widehat{\varepsilon}\,(T, c) = 1 + |c|$ whenever $T$ is not an approximately $c$-angle preserving mapping.
Also, it is easy to see that $\widehat{\varepsilon}\,(T, - c) = \widehat{\varepsilon}\,(T, c)
= \widehat{\varepsilon}\,(\alpha T, c)$ for all $\alpha \in \mathbb{R}\setminus\{0\}$.

In the last section we intend to state some basic properties of the function $\widehat{\varepsilon}\,(., c)$.
If $T\in \mathbb{B}(\mathscr{H}, \mathscr{K})$, then we propose an exact formula for
$\widehat{\varepsilon}\,(T, c)$ in terms of the norm $\|T\|$ and the minimum modulus $[T]$ of $T$.
Here $[T]$ is defined to be the largest number $m\geq 0$ such that $\|Tx\| \geq m \|x\|\,\,(x \in \mathscr{H})$.
We then use this formula to characterize the approximately $c$-angle preserving mappings (Corollary \ref{cr.4}).
Actually, we show that every nonzero linear mapping $T$ is the approximately $c$-angle
preserving mapping if and only if $T$ is bounded below.

\section{Linear mappings preserving angles}

We start our work with the following lemmas.
The first lemma follows immediately from the definition of the angle between vectors.
\begin{lemma}\label{le.2}
Let $c\in [0, 1)$. If $x, y \in\mathscr{H}$ such that $\|x\| = \|y\| = 1$ and $x\perp y$, then
\begin{enumerate}
\item[(i)] $\big(x + \sqrt{\frac{1 + c}{1 - c}} y\big)\, \angle_c \,\big(- x + \sqrt{\frac{1 + c}{1 - c}} y\big)$.\\

\item[(ii)] $\big(x + \sqrt{\frac{1 - c}{1 + c}} y\big)\, \angle_c \,\big(x - \sqrt{\frac{1 - c}{1 + c}} y\big)$.
\end{enumerate}
\end{lemma}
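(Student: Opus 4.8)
The plan is to verify both identities by a direct computation, using the normalization $\|x\|=\|y\|=1$ and $\langle x,y\rangle=0$. Recall that $u\,\angle_c\,v$ means exactly $\langle u,v\rangle = c\,\|u\|\,\|v\|$, so in each case I must compute three things: the inner product $\langle u,v\rangle$, the norm $\|u\|$, and the norm $\|v\|$, and then check that the defining equation holds. Because $x\perp y$, the Pythagorean theorem applies and all norms split as sums of squares of the coefficients, which keeps the bookkeeping short.

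For part (i), set $u = x + \sqrt{\tfrac{1+c}{1-c}}\,y$ and $v = -x + \sqrt{\tfrac{1+c}{1-c}}\,y$. Expanding bilinearly and using $\langle x,y\rangle = 0$, $\|x\|=\|y\|=1$, I get $\langle u,v\rangle = -\|x\|^2 + \tfrac{1+c}{1-c}\|y\|^2 = -1 + \tfrac{1+c}{1-c} = \tfrac{2c}{1-c}$. For the norms, $\|u\|^2 = \|v\|^2 = 1 + \tfrac{1+c}{1-c} = \tfrac{2}{1-c}$, so $\|u\|\,\|v\| = \tfrac{2}{1-c}$. Hence $\langle u,v\rangle = c\,\|u\|\,\|v\|$, which is precisely $u\,\angle_c\,v$. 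Part (ii) is entirely analogous with $u = x + \sqrt{\tfrac{1-c}{1+c}}\,y$ and $v = x - \sqrt{\tfrac{1-c}{1+c}}\,y$: one finds $\langle u,v\rangle = 1 - \tfrac{1-c}{1+c} = \tfrac{2c}{1+c}$ and $\|u\|^2 = \|v\|^2 = 1 + \tfrac{1-c}{1+c} = \tfrac{2}{1+c}$, so again $\langle u,v\rangle = c\,\|u\|\,\|v\|$.

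There is no real obstacle here; the only thing to be mildly careful about is the sign and the case $c=0$ (where all four vectors become $x\pm y$ and both statements reduce to $\langle x+y, \pm x + y\rangle = 0$, consistent with $\angle_0 = \perp$), and the fact that $\tfrac{1+c}{1-c}$ and $\tfrac{1-c}{1+c}$ are well-defined and positive precisely because $c\in[0,1)$, so the square roots make sense. The statement "follows immediately from the definition" in the text is accurate: the content is just these two short expansions.
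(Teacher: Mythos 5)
Your computation is correct and is exactly the verification the paper has in mind when it says the lemma ``follows immediately from the definition'' (the paper gives no written proof). Both inner products and norms check out: $\langle u,v\rangle = \tfrac{2c}{1\mp c}$ and $\|u\|\,\|v\| = \tfrac{2}{1\mp c}$ in the respective cases, so $\langle u,v\rangle = c\,\|u\|\,\|v\|$ as required.
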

Let us quote a result from \cite{Wo.1}.
\begin{lemma}\cite[Theorem 2.3]{Wo.1}\label{le.3}
Let $T\in \mathbb{B}(\mathscr{H}, \mathscr{K})$ be an injective linear map.
Suppose that $\dim \mathscr{H} = n.$ Then there exist an orthonormal basis
$\{x_1, x_2, ..., x_n\}$ for $\mathscr{H}$ such that
$$[T] = \|Tx_1\|, \quad \|Tx_2\| = \|T\| \quad \mbox{and} \quad  Tx_i \perp Tx_j \quad (1 \leq i\neq j \leq n).$$
\end{lemma}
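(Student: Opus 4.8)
The plan is to deduce this from the spectral decomposition of the positive operator $T^*T$. Since $T$ is injective and $\dim\mathscr{H}=n$ is finite, $A:=T^*T\in\mathbb{B}(\mathscr{H})$ is self-adjoint and positive definite, because $\langle Ax,x\rangle=\|Tx\|^2>0$ for every nonzero $x\in\mathscr{H}$. By the spectral theorem for self-adjoint operators on a finite-dimensional real Hilbert space, there is an orthonormal basis $\{e_1,\dots,e_n\}$ of $\mathscr{H}$ and scalars $\lambda_1,\dots,\lambda_n>0$ with $Ae_k=\lambda_k e_k$ for each $k$.

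This basis already has the orthogonality property we want. Indeed, for $i\neq j$,
$$\langle Te_i,Te_j\rangle=\langle T^*Te_i,e_j\rangle=\lambda_i\langle e_i,e_j\rangle=0,$$
so $\{Te_1,\dots,Te_n\}$ is an orthogonal system, and $\|Te_k\|^2=\langle Ae_k,e_k\rangle=\lambda_k$. To locate $[T]$ and $\|T\|$, write a unit vector $x=\sum_k a_k e_k$ with $\sum_k|a_k|^2=1$, so that $\|Tx\|^2=\langle Ax,x\rangle=\sum_k\lambda_k|a_k|^2$, which lies between $\min_k\lambda_k$ and $\max_k\lambda_k$ and attains both bounds at suitable basis vectors. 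Hence $[T]^2=\min_k\lambda_k$ and $\|T\|^2=\max_k\lambda_k$.

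It then remains only to relabel. Since $n\geq 2$, I would choose $x_1$ to be an $e_k$ realizing $\min_k\lambda_k$ and $x_2$ an $e_k$ realizing $\max_k\lambda_k$; these two selections can always be made distinct (if $\min_k\lambda_k=\max_k\lambda_k$ then $A$ is a positive scalar multiple of the identity and any orthonormal basis does the job), and then list the remaining eigenvectors as $x_3,\dots,x_n$. This yields $\|Tx_1\|=[T]$, $\|Tx_2\|=\|T\|$, while pairwise orthogonality of the images $Tx_i$ is unaffected by the relabeling. I do not anticipate a genuine obstacle here: everything is a direct consequence of the spectral theorem, and the only point deserving an explicit comment is the last relabeling step in the degenerate case where the smallest and largest eigenvalues of $T^*T$ coincide.
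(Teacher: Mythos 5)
Your proof is correct and complete: diagonalizing the positive definite operator $T^*T$ gives an orthonormal eigenbasis whose images under $T$ are pairwise orthogonal, and the extremal eigenvalues identify $[T]^2$ and $\|T\|^2$; the relabeling remark for the degenerate case is the only point needing care and you handle it. Note that the paper itself offers no proof of this lemma --- it is quoted from \cite[Theorem 2.3]{Wo.1} --- so there is nothing internal to compare against; your singular-value-decomposition argument is the standard one for the Hilbert-space setting considered here.
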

As a consequence of Lemma \ref{le.3}, the following result immediately follows.
\begin{corollary}\label{cr.3.5}
Let $T: \,\mathscr{H}\longrightarrow \mathscr{K}$ be a nonzero injective linear map.
Suppose that unit vectors $x, y\in \mathscr{H}$ are linearly independent.
Then there exist unit vectors $x_1, x_2$ such that
$$x_1\perp x_2, \quad Tx_1 \perp Tx_2, \quad \|Tx_1\| \leq \|Tx\| \leq \|Tx_2\| \quad \mbox{and} \quad \|Tx_1\| \leq \|Ty\| \leq \|Tx_2\|.$$
\end{corollary}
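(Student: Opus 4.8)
The plan is to reduce to the two-dimensional situation covered by Lemma~\ref{le.3}. Since $x$ and $y$ are linearly independent, the subspace $\mathscr{H}_0 := \operatorname{span}\{x, y\}$ has dimension $2$, and it inherits the inner product of $\mathscr{H}$. Let $T_0 := T|_{\mathscr{H}_0}$ be the restriction of $T$ to $\mathscr{H}_0$, viewed as a linear map from $\mathscr{H}_0$ into $\mathscr{K}$. Because $T$ is injective, so is $T_0$, and $T_0$ is clearly bounded (indeed $\mathscr{H}_0$ is finite-dimensional). Thus $T_0$ satisfies the hypotheses of Lemma~\ref{le.3} with $n = 2$.

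Applying Lemma~\ref{le.3} to $T_0$, we obtain an orthonormal basis $\{x_1, x_2\}$ of $\mathscr{H}_0$ such that
$[T_0] = \|T_0 x_1\| = \|Tx_1\|$, $\|Tx_2\| = \|T_0 x_2\| = \|T_0\|$, and $Tx_1 \perp Tx_2$. Since $\{x_1, x_2\}$ is orthonormal in $\mathscr{H}_0 \subseteq \mathscr{H}$, the vectors $x_1, x_2$ are unit vectors with $x_1 \perp x_2$ in $\mathscr{H}$, and $Tx_1 \perp Tx_2$ holds in $\mathscr{K}$ as required.

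It remains to establish the four norm inequalities. For any unit vector $z \in \mathscr{H}_0$ we have, directly from the definitions of the minimum modulus and the norm of $T_0$,
\begin{equation*}
\|Tx_1\| = [T_0] \leq \|T_0 z\| = \|Tz\| \leq \|T_0\| = \|Tx_2\|.
\end{equation*}
Since $x$ and $y$ are unit vectors lying in $\mathscr{H}_0$, applying this with $z = x$ and then with $z = y$ yields $\|Tx_1\| \leq \|Tx\| \leq \|Tx_2\|$ and $\|Tx_1\| \leq \|Ty\| \leq \|Tx_2\|$, which completes the proof.

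There is essentially no serious obstacle here; the only point requiring a moment's care is to observe that the minimum modulus and norm appearing in Lemma~\ref{le.3} are those of the \emph{restricted} operator $T_0$, so that the extremal estimates $[T_0] \leq \|Tz\| \leq \|T_0\|$ are valid precisely for unit vectors $z$ in $\mathscr{H}_0$ — which is exactly the generality needed, as both $x$ and $y$ belong to $\mathscr{H}_0$.
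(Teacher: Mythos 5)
Your proof is correct and is precisely the argument the paper has in mind: the paper states the corollary as an ``immediate consequence'' of Lemma~\ref{le.3} without writing out details, and the intended reduction is exactly your restriction of $T$ to the two-dimensional subspace $\operatorname{span}\{x,y\}$ followed by an application of the lemma with $n=2$ and the definitions of $[T_0]$ and $\|T_0\|$.
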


We are now ready to characterize the $c$-angle preserving mappings.
In fact, the following result is a generalization of \cite[Theorem 1]{Ch.1}.

\begin{theorem}\label{th.00}
Let $T: \,\mathscr{H}\longrightarrow \mathscr{K}$ be a nonzero linear map and let $c\in (-1,1)$.
Then the following statements are mutually equivalent:
\begin{enumerate}
\item[(i)] $x\, \angle_c \,y\, \Longrightarrow \,Tx\, \angle_c \,Ty \qquad (x, y\in \mathscr{H}).$

\item[(ii)] There exists $\gamma>0$ such that $\|Tx\|=\gamma\|x\| \qquad (x\in \mathscr{H}).$
\end{enumerate}
\end{theorem}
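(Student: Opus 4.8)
The implication (ii) $\Rightarrow$ (i) is the easy direction: if $\|Tx\| = \gamma\|x\|$ for all $x$, then for any $x, y$ with $x\,\angle_c\,y$ we compute, using linearity and the polarization-type identity $\langle Tx, Ty\rangle = \tfrac14\big(\|T(x+y)\|^2 - \|T(x-y)\|^2\big) = \tfrac{\gamma^2}{4}\big(\|x+y\|^2 - \|x-y\|^2\big) = \gamma^2\langle x, y\rangle = \gamma^2 c\,\|x\|\,\|y\| = c\,\|Tx\|\,\|Ty\|$, so $Tx\,\angle_c\,Ty$. This also shows $T$ is injective (since $\gamma > 0$), which is convenient for the converse.

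For (i) $\Rightarrow$ (ii), the plan is first to reduce to the case $c \geq 0$: since $x\,\angle_c\,y \Leftrightarrow x\,\angle_{-c}\,(-y)$ and $T$ is linear, $T$ is $c$-angle preserving iff it is $(-c)$-angle preserving, so assume $c \in [0,1)$. Next I would show $T$ is injective: if $Tx = 0$ for some $x \neq 0$, pick (using $\dim\mathscr{H} \geq 2$) a unit vector $y \perp x$ and normalize $x$ to be a unit vector; by Lemma~\ref{le.2}(i) the vectors $u = x + \sqrt{\tfrac{1+c}{1-c}}\,y$ and $v = -x + \sqrt{\tfrac{1+c}{1-c}}\,y$ satisfy $u\,\angle_c\,v$, hence $Tu\,\angle_c\,Tv$, i.e. $\langle Tu, Tv\rangle = c\|Tu\|\,\|Tv\|$; but $Tu = Tv = \sqrt{\tfrac{1+c}{1-c}}\,Ty$, forcing $\tfrac{1+c}{1-c}\|Ty\|^2 = c\cdot\tfrac{1+c}{1-c}\|Ty\|^2$, so $Ty = 0$ (as $c < 1$). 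Running this argument with $y$ and an arbitrary unit vector $z \perp y$ (or just noting the set of vectors killed by $T$ would have to be all of a $2$-dimensional subspace and then spread further), together with the fact that $T \neq 0$, yields a contradiction; so $T$ is injective.

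The heart of the matter is then to prove $\|Tx\|$ is a constant multiple of $\|x\|$, equivalently that $\|Tx_1\| = \|Tx_2\|$ whenever $x_1 \perp x_2$ are unit vectors. Here I would apply Lemma~\ref{le.2} to a pair of orthonormal vectors $x, y$: writing $a = \sqrt{\tfrac{1+c}{1-c}}$, part (i) gives $(x + ay)\,\angle_c\,(-x + ay)$, so applying (i) and expanding $\langle T(x+ay), T(-x+ay)\rangle = c\|T(x+ay)\|\,\|T(-x+ay)\|$ with the abbreviations $p = \|Tx\|^2$, $q = \|Ty\|^2$, $r = \langle Tx, Ty\rangle$ produces one equation in $p, q, r$; part (ii) of the lemma (with $b = \sqrt{\tfrac{1-c}{1+c}} = 1/a$) produces a second. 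These two equations should force $p = q$ and, as a byproduct, $r = 0$ — this is the routine but essential computation, and the main obstacle is checking it does not degenerate (one must use $0 \leq c < 1$ to keep the coefficients nonzero and to take square roots legitimately). Once $\|Tx_1\| = \|Tx_2\|$ for all orthonormal pairs, a standard argument (extend to an orthonormal basis, or use that any two unit vectors are connected by a chain of orthogonal steps when $\dim \geq 2$) shows $\|Tu\|$ is the same for every unit vector $u$; call it $\gamma$. Since $T \neq 0$ and injective, $\gamma > 0$, and homogeneity gives $\|Tx\| = \gamma\|x\|$ for all $x$, completing the proof.
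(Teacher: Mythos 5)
Your proof is essentially correct, but it follows a genuinely different route from the paper's. The paper does not prove (i)$\Rightarrow$(ii) directly at all: it derives it as the special case $\varepsilon=0$ of Corollary \ref{cr.4.5}, which in turn rests on Theorem \ref{th.3}, Theorem \ref{th.4} and, crucially, on Corollary \ref{cr.3.5} (hence on the simultaneous-orthogonal-basis result of Lemma \ref{le.3} quoted from \cite{Wo.1}). You instead work directly: after the same reduction to $c\ge 0$ and the same injectivity argument via Lemma \ref{le.2}(i) that the paper uses in Case 1 of Theorem \ref{th.3}, you extract two scalar equations from the two parts of Lemma \ref{le.2}. This central computation, which you state but do not carry out, does work: with $p=\|Tx\|^2$, $q=\|Ty\|^2$, $r=\langle Tx,Ty\rangle$, $a^2=\tfrac{1+c}{1-c}$, $b^2=a^{-2}$, squaring the two relations and dividing by $a^2$ (resp.\ $b^2$) gives $(1-c)^2p^2-2(1+c^2)pq+(1+c)^2q^2+4c^2r^2=0$ and $(1+c)^2p^2-2(1+c^2)pq+(1-c)^2q^2+4c^2r^2=0$; subtracting yields $4c(p^2-q^2)=0$, whence $p=q$ and then $r=0$ when $c\ne 0$, while for $c=0$ equation (i) alone already gives $p=q$. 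Your approach buys self-containedness and elementarity (no minimum modulus, no $\widehat{\varepsilon}$ machinery, no appeal to \cite{Wo.1}), and it handles a possibly unbounded linear $T$ exactly as stated in the theorem, whereas the paper's Corollary \ref{cr.4.5} is formulated for $T\in\mathbb{B}(\mathscr{H},\mathscr{K})$; what the paper's route buys is the quantitative $\varepsilon$-version for free.

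One step needs repair: the claim that ``any two unit vectors are connected by a chain of orthogonal steps when $\dim\ge 2$'' is false in dimension exactly $2$, where the only unit vectors orthogonal to $u$ are $\pm u^{\perp}$, so the chain from $u$ never leaves $\{\pm u,\pm u^{\perp}\}$ and knowing $\|Tu\|=\|Tu^{\perp}\|$ for every $u$ does not by itself force constancy of a general function on the circle. The gap is easy to close with what you already have. Since $r=0$ for every orthonormal pair (for $c=0$ this is the hypothesis itself), bilinearity gives that $T$ preserves orthogonality; then for arbitrary unit vectors $u,v$ one has $(u+v)\perp(u-v)$, hence $0=\langle T(u+v),T(u-v)\rangle=\|Tu\|^2-\|Tv\|^2$. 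Alternatively, restrict to the plane $\mathrm{span}\{u,v\}$ and note that $w\mapsto\|Tw\|^2$ is a quadratic form there whose values on an orthonormal pair $\{w,w^{\perp}\}$ sum to a constant (the trace), so equality on each such pair forces constancy on unit vectors. With either fix the argument is complete.
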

\begin{proof}
The implication (ii)$\Rightarrow$(i) follows from the polarization formula. We do not prove the implication
(i)$\Rightarrow$(ii).  We will prove a more general theorem (see Corollary \ref{cr.4.5}).
\end{proof}
The following example shows that Theorem \ref{th.00} fails if the assumption of linearity is dropped.
Moreover, nonlinear mappings satisfying
$x\,\angle_c\,y \,\Longrightarrow \, Tx\,\angle_c\, Ty\ \ (x, y\in \mathscr{H})$ may be
very strange, even noncontinuous.
\begin{example}
Let $c\in (-1,1)$. Let $\varphi\colon\mathscr{H}\to \mathbb{R}$ be fixed nonvanishing function.
Then for the mapping $T:\mathscr{H}\longrightarrow \mathscr{H}$
defined by $T(x):=\varphi(x)\!\cdot\! x$ we have $x\, \angle_c \,y\, \Longrightarrow \,Tx\, \angle_c \,Ty$
for all $x, y\in \mathscr{H}$. If $\varphi$ is not continuous, then $T$ clearly is not continuous. In particular,
$T$ clearly is not a similarity.
\end{example}
Taking $\mathscr{K} = \mathscr{H}$ and
$T = id: (\mathscr{H}, {\langle ., .\rangle}_1)\longrightarrow (\mathscr{H}, {\langle ., .\rangle}_2)$
the identity map, one obtains, from Theorem \ref{th.00}, the following result.

\begin{corollary}\label{cr.9}
Let $c\in (-1,1)$. Suppose that $\mathscr{H}$ is a vector space equipped with two (complete) inner products
${\langle ., .\rangle}_1$ and ${\langle ., .\rangle}_2$ generating the norms
${\|.\|}_1$, ${\|.\|}_2$ and $c$-angle relations $\, \angle_{c,1} \,$, $\, \angle_{c,2} \,$,
respectively. Then the following conditions are equivalent:
\begin{itemize}
\item[(i)] There exists $\gamma>0$ such that ${\|x\|}_2=\gamma{\|x\|}_1 \qquad (x\in \mathscr{H})$.

\item[(ii)] $x\, \angle_{c,1} \,y \,\, \Longrightarrow \,\, x\, \angle_{c,2} \,y \qquad (x, y\in \mathscr{H})$.

\item[(iii)] $\sup\left\{\big|\frac{{\langle x, y\rangle}_2}{{\|x\|}_2\,{\|y\|}_2} - c\big|;
\,\,x \,\angle_{c,1}\, y,\, x, y\in\mathscr{H}\setminus\{0\}\right\} = 0$.
\end{itemize}
\end{corollary}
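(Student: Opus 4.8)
The plan is to reduce Corollary \ref{cr.9} to Theorem \ref{th.00} applied to the identity map $T = \mathrm{id}\colon (\mathscr{H}, {\langle\cdot,\cdot\rangle}_1) \longrightarrow (\mathscr{H}, {\langle\cdot,\cdot\rangle}_2)$, together with a short elementary argument for (ii)$\Leftrightarrow$(iii). Note first that, although this $T$ need not be continuous, Theorem \ref{th.00} requires only that $T$ be a nonzero linear map, and $(\mathscr{H},{\langle\cdot,\cdot\rangle}_2)$ is a real Hilbert space because ${\langle\cdot,\cdot\rangle}_2$ is assumed complete; so the hypotheses of Theorem \ref{th.00} are met and no boundedness issue arises.

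For (i)$\Leftrightarrow$(ii): with the above $T$, the assertion ``$x\,\angle_{c,1}\,y \Rightarrow Tx\,\angle_{c,2}\,Ty$'' is exactly condition (ii) of the Corollary, and ``there exists $\gamma>0$ with $\|Tx\|_2 = \gamma\|x\|_1$'' is exactly condition (i). Hence Theorem \ref{th.00} gives (i)$\Leftrightarrow$(ii) at once. Concretely, the implication (i)$\Rightarrow$(ii) of the Corollary also follows directly from the polarization identity: if $\|\cdot\|_2 = \gamma\|\cdot\|_1$ then $\langle x,y\rangle_2 = \gamma^2\langle x,y\rangle_1$ for all $x,y$, so $\langle x,y\rangle_1 = c\|x\|_1\|y\|_1$ yields $\langle x,y\rangle_2 = c\|x\|_2\|y\|_2$; this is the content of the implication (ii)$\Rightarrow$(i) of Theorem \ref{th.00}.

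For (ii)$\Leftrightarrow$(iii): I would observe that for nonzero $x,y$ the relation $x\,\angle_{c,2}\,y$ holds if and only if $\bigl|\tfrac{\langle x,y\rangle_2}{\|x\|_2\|y\|_2} - c\bigr| = 0$. Thus, if (ii) holds, every quantity occurring in the supremum in (iii) equals $0$, so that supremum is $0$; conversely, since those quantities are nonnegative, a vanishing supremum forces each of them to be $0$, i.e. $x\,\angle_{c,1}\,y$ implies $x\,\angle_{c,2}\,y$ for nonzero $x,y$, while for $x=0$ or $y=0$ both relations hold trivially. To legitimize the supremum in (iii) I would also note that it is taken over a nonempty set: since $\dim\mathscr{H}\geq 2$, choose $e_1,e_2$ orthonormal for ${\langle\cdot,\cdot\rangle}_1$ and put $x=e_1$, $y = c\,e_1 + \sqrt{1-c^2}\,e_2$; then $\|x\|_1 = \|y\|_1 = 1$ and $\langle x,y\rangle_1 = c$, so $x\,\angle_{c,1}\,y$ with $x,y\neq 0$ (alternatively one may invoke Lemma \ref{le.2}).

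I do not expect a genuine obstacle: the mathematical substance is entirely carried by Theorem \ref{th.00}, and what remains is bookkeeping. The only points requiring a little care are the nonemptiness of the set defining the supremum in (iii), the degenerate case of a zero vector, and the (permissible) fact that $\mathrm{id}$ between the two inner-product structures may fail to be bounded.
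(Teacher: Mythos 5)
Your proposal is correct and follows the paper's own route exactly: the paper derives Corollary \ref{cr.9} precisely by applying Theorem \ref{th.00} to the identity map $T=\mathrm{id}\colon(\mathscr{H},\langle\cdot,\cdot\rangle_1)\to(\mathscr{H},\langle\cdot,\cdot\rangle_2)$, which is what you do. The extra details you supply (the trivial (ii)$\Leftrightarrow$(iii) equivalence, nonemptiness of the set in (iii), and the zero-vector case) are correct and are simply left implicit in the paper.
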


\begin{corollary}
Let $T\in \mathbb{B}(\mathscr{H}, \mathscr{K})$ be a bijective linear map and let $c\in (-1,1)$.
Then the following statements are mutually equivalent:
\begin{enumerate}
\item[(i)] $x\, \angle_c \,y\, \Longrightarrow \,Tx\, \angle_c \,Ty \qquad (x, y\in \mathscr{H}).$

\item[(ii)] $\|TST^{-1}\| \leq \|S\|$ for all invertible linear mappings $S\in \mathbb{B}(\mathscr{H}).$
\end{enumerate}
\end{corollary}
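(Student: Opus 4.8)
The plan is to run everything through the characterization already obtained in Theorem~\ref{th.00}, which says that (i) holds if and only if there is a $\gamma>0$ with $\|Tx\|=\gamma\|x\|$ for all $x\in\mathscr{H}$; equivalently (since $T$ is bijective), $T=\gamma U$ for some surjective isometry $U$, i.e. a unitary with $U^{-1}=U^{*}$. Granting this, the implication (i)$\Rightarrow$(ii) is essentially free: for any invertible $S\in\mathbb{B}(\mathscr{H})$ one has $TST^{-1}=(\gamma U)S(\gamma U)^{-1}=USU^{*}$, and because $U$ and $U^{*}$ are isometries, $\|TST^{-1}\|=\|USU^{*}\|\le\|S\|$ (indeed with equality). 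The whole difficulty therefore lies in (ii)$\Rightarrow$(i).

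For that direction I would probe condition (ii) with a one-parameter family of very simple invertible operators, namely rank-one perturbations of the identity. Fix a unit vector $x\in\mathscr{H}$ and, for $t>0$, set $S_{t}:=I+t\,(x\otimes x)$, where $(x\otimes x)z:=\langle z,x\rangle x$. Then $S_{t}$ is invertible (explicitly $S_{t}^{-1}=I-\tfrac{t}{1+t}(x\otimes x)$) and $\|S_{t}\|=1+t$. Using that $T$ is bijective --- so $T^{-1}\in\mathbb{B}(\mathscr{K},\mathscr{H})$ by the open mapping theorem and $(T^{*})^{-1}=(T^{-1})^{*}$ --- a short computation gives $TS_{t}T^{-1}=I+t\,(u\otimes v)$ with $u:=Tx$, $v:=(T^{*})^{-1}x$, and, what makes the estimate work, $\langle u,v\rangle=\langle Tx,(T^{-1})^{*}x\rangle=\langle T^{-1}Tx,x\rangle=1$. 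Evaluating $TS_{t}T^{-1}$ on the unit vector $v/\|v\|$ and expanding the square of the norm yields
\[
\big\|TS_{t}T^{-1}\big\|^{2}\ \ge\ 1+2t+t^{2}\|u\|^{2}\|v\|^{2}\ \ge\ (1+t)^{2},
\]
the last inequality by Cauchy--Schwarz, $\|u\|\,\|v\|\ge|\langle u,v\rangle|=1$. Hence $\|TS_{t}T^{-1}\|\ge\|S_{t}\|$ unconditionally, so (ii) forces equality throughout; in particular equality in Cauchy--Schwarz, and since $\langle u,v\rangle=1>0$ this means $u$ is a strictly positive multiple of $v$.

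Writing out ``$Tx$ is a positive multiple of $(T^{*})^{-1}x$'' as an eigenvalue equation gives $T^{*}Tx=\mu_{x}\,x$ with $\mu_{x}=\|(T^{*})^{-1}x\|^{-2}>0$, valid for every unit vector $x$ and hence, by homogeneity, for every nonzero $x\in\mathscr{H}$. Thus every nonzero vector is an eigenvector of the positive operator $T^{*}T$; feeding this two linearly independent vectors together with their sum (here $\dim\mathscr{H}\ge 2$ is used) forces all the eigenvalues to coincide, so $T^{*}T=\gamma^{2}I$ for some $\gamma>0$. Then $\|Tx\|^{2}=\langle T^{*}Tx,x\rangle=\gamma^{2}\|x\|^{2}$, and Theorem~\ref{th.00} delivers (i). I expect the one genuinely creative step to be the choice of the test operators $S_{t}$ together with the observation that $\langle Tx,(T^{*})^{-1}x\rangle=1$, which is exactly what pins the Cauchy--Schwarz estimate to equality precisely when $T$ is a scalar multiple of a unitary; everything afterwards is routine. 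A minor point worth watching is that in infinite dimensions the invertible operators are not dense in $\mathbb{B}(\mathscr{H})$, so one really must argue within the class of honestly invertible test maps --- which the $S_{t}$ are.
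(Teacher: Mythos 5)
Your proof is correct, and it follows the same overall strategy as the paper: reduce both directions to Theorem~\ref{th.00} and, for (ii)$\Rightarrow$(i), test the hypothesis on invertible rank-one perturbations of the identity. The difference is in how the conclusion is extracted. The paper takes $S=\varepsilon I + x\otimes y$ for \emph{arbitrary} $x,y$, lets $\varepsilon\to 0^{+}$ to get $\|T(x\otimes y)T^{-1}\|\le\|x\otimes y\|$, i.e.\ $\|Tx\|\,\|(T^{*})^{-1}y\|\le\|x\|\,\|y\|$, takes suprema to obtain $\|T\|\,\|T^{-1}\|\le 1$, and then sandwiches $\|T\|\,\|x\|\le\|x\|/\|T^{-1}\|\le\|Tx\|\le\|T\|\,\|x\|$ to conclude $\|Tx\|=\|T\|\,\|x\|$ directly. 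You instead use the symmetric perturbations $S_{t}=I+t\,(x\otimes x)$, avoid any limiting argument (and the minor invertibility caveat that $\varepsilon I+x\otimes y$ can fail to be invertible for one exceptional value of $\varepsilon$), and pin down equality in Cauchy--Schwarz via the identity $\langle Tx,(T^{*})^{-1}x\rangle=1$; this yields $T^{*}Tx=\mu_{x}x$ for every $x$, and the standard ``every vector is an eigenvector'' lemma (using $\dim\mathscr{H}\ge 2$) gives $T^{*}T=\gamma^{2}I$. Your route is slightly longer but self-contained at each step and gives the structural statement $T^{*}T=\gamma^{2}I$ explicitly; the paper's route gets the norm identity in one sweep from the operator-norm inequality. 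Both are sound.
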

\begin{proof}
(i)$\Rightarrow$(ii) This implication follows immediately from the equivalence
(i)$\Leftrightarrow$(ii) of Theorem \ref{th.00}.

(ii)$\Rightarrow$(i) Suppose that (ii) holds. For every $\varepsilon> 0$, we have
$$\Big\|\varepsilon I + T(x\otimes y)T^{-1}\Big\| = \Big\|T\big(\varepsilon I + x\otimes y\big)T^{-1}\Big\|
\leq \Big\|\varepsilon I + x\otimes y\Big\|\qquad (x, y\in \mathscr{H}).$$
Here, $x\otimes y$ denotes the rank one operator in $\mathbb{B}(\mathscr{H})$
defined by $(x\otimes y)(z) : = \langle z, y\rangle x$ for $z\in\mathscr{H}$.
Letting $\varepsilon\rightarrow 0^{+}$ we obtain
$$\big\|T(x\otimes y)T^{-1}\big\| \leq \big\|x\otimes y\big\| \qquad (x, y\in \mathscr{H}).$$
This implies $\|T\|\|T^{-1}\|\leq 1$.
Hence
$$\|T\|\|x\| \leq \frac{\|x\|}{\|T^{-1}\|} \leq \|Tx\| \leq \|T\|\|x\| \qquad (x\in \mathscr{H}),$$
which yields that
$$\|Tx\| = \|T\|\|x\| \qquad (x\in \mathscr{H}).$$
Now, by the equivalence (i)$\Leftrightarrow$(ii) of Theorem \ref{th.00}, we get (i).
\end{proof}
\section{Approximately preserving angle mappings}

Our aim in this section is to characterize the approximately preserving angle mappings.
We start our work with the following lemma. It follows immediately from Definition \ref{df.2}.
\begin{lemma}\label{le.1}
Let $T: \,\mathscr{H}\longrightarrow \mathscr{K}$ be a linear map and let $c\in (-1,1)$.
Then the following statements hold:
\begin{enumerate}
\item[(i)] $\widehat{\varepsilon}\,(T, c) = \sup\left\{\big|\frac{\langle Tx, Ty\rangle}
{\|Tx\|\,|Ty\|} - c\big|; \,\,x\, \angle_c\, y,\, x, y\in\mathscr{H}\setminus\{0\}\right\}$.\\

\item[(ii)] $\widehat{\varepsilon}\,(T, c) = \sup\left\{\big|\frac{\langle Tx, Ty\rangle}
{\|Tx\|\,|Ty\|} - c\big|; \,\,x\, \angle_c\, y,\, \|x\| = \|y\| =1, \,x, y\in\mathscr{H}\right\}$.
\end{enumerate}
\end{lemma}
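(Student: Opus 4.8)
The plan is to unwind Definition \ref{df.2} directly and identify the infimum there with the supremum in (i). First I would abbreviate
$$s := \sup\Big\{\big|\tfrac{\langle Tx, Ty\rangle}{\|Tx\|\,\|Ty\|} - c\big| \,:\, x\, \angle_c\, y,\ x, y\in\mathscr{H}\setminus\{0\},\ Tx\neq 0,\ Ty\neq 0\Big\},$$
with the convention $\sup\emptyset = 0$, and record that, by the Cauchy--Schwarz inequality, $\big|\frac{\langle Tx, Ty\rangle}{\|Tx\|\,\|Ty\|} - c\big| \le 1 + |c|$ for every admissible pair, so that $0 \le s \le 1 + |c|$.

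The core step is to show that, for each $\varepsilon \in [0, 1+|c|]$, the mapping $T$ is $(\varepsilon, c)$-angle preserving if and only if $\varepsilon \ge s$. For the ``only if'' part I would take an arbitrary pair $x\,\angle_c\,y$ with $Tx, Ty \ne 0$, divide the inequality $\big|\langle Tx, Ty\rangle - c\|Tx\|\,\|Ty\|\big| \le \varepsilon\|Tx\|\,\|Ty\|$ by $\|Tx\|\,\|Ty\| > 0$, and pass to the supremum to get $s \le \varepsilon$. For the ``if'' part I would note that whenever $Tx = 0$ or $Ty = 0$ the relation $Tx\,\angle_c^\varepsilon\,Ty$ is automatic (both sides of its defining inequality vanish), whereas for $Tx, Ty \ne 0$ it follows by multiplying $\big|\frac{\langle Tx, Ty\rangle}{\|Tx\|\,\|Ty\|} - c\big| \le s \le \varepsilon$ back by $\|Tx\|\,\|Ty\|$. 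Given this equivalence together with $s \le 1+|c|$, the set appearing in Definition \ref{df.2} equals the interval $[s,\,1+|c|]$, whose infimum is $s$; hence $\widehat{\varepsilon}\,(T,c) = s$, which is (i).

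For (ii) I would invoke the weak homogeneity of $\angle_c$: for nonzero $x, y$ one has $x\,\angle_c\,y$ precisely when $\frac{x}{\|x\|}\,\angle_c\,\frac{y}{\|y\|}$, and, $T$ being linear, the quotient $\frac{\langle Tx, Ty\rangle}{\|Tx\|\,\|Ty\|}$ is unchanged when $x, y$ are replaced by $\frac{x}{\|x\|}, \frac{y}{\|y\|}$ (for $Tx, Ty \ne 0$). Therefore the supremum in (i) is not affected by restricting the variables to unit vectors, and this is exactly the supremum in (ii).

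I do not anticipate a real obstacle, since the statement merely repackages Definition \ref{df.2}. The only points needing care are the bookkeeping around pairs with $Tx = 0$ or $Ty = 0$, which impose no constraint and must be excluded from the quotient, and the verification $s \le 1+|c|$, which guarantees that the infimum in Definition \ref{df.2} is genuinely attained at $s$ and is not distorted by the truncation at $1 + |c|$.
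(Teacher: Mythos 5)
Your argument is correct: the paper offers no proof of this lemma, stating only that it ``follows immediately from Definition \ref{df.2}'', and your direct unwinding of that definition --- identifying the set in the infimum with the interval $[s,1+|c|]$, handling the pairs with $Tx=0$ or $Ty=0$ (which impose no constraint), and using weak homogeneity of $\angle_c$ together with linearity of $T$ to pass to unit vectors for (ii) --- is exactly the intended verification. No discrepancy with the paper's (implicit) approach.
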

Our next theorem is a generalization of \cite[Lemma 2.2]{Z.C.H.K}.
\begin{theorem}\label{th.3}
Let $T: \,\mathscr{H}\longrightarrow \mathscr{K}$ be a nonzero linear map and let $c\in (-1,1)$.
If $[T] = 0$, then $\widehat{\varepsilon}\,(T, c) = 1 + |c|.$
\end{theorem}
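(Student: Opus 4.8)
\emph{Plan of proof.} The strategy is to establish the two inequalities $\widehat{\varepsilon}\,(T,c)\le 1+|c|$ and $\widehat{\varepsilon}\,(T,c)\ge 1+|c|$ separately. The first is immediate from Definition \ref{df.2} (the infimum is taken over a subset of $[0,1+|c|]$); alternatively, by the Cauchy--Schwarz inequality $\big|\tfrac{\langle Tx,Ty\rangle}{\|Tx\|\,\|Ty\|}\big|\le 1$, so every quantity appearing in the supremum of Lemma \ref{le.1}(i) is at most $1+|c|$. Since $\widehat{\varepsilon}\,(T,-c)=\widehat{\varepsilon}\,(T,c)$, I would assume from now on that $c\in[0,1)$, which also makes Lemma \ref{le.2}(i) available.

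For the reverse inequality the aim is to produce, for all large $n$, pairs of nonzero vectors $X_n\,\angle_c\,Y_n$ whose images satisfy $\tfrac{\langle TX_n,TY_n\rangle}{\|TX_n\|\,\|TY_n\|}\to -1$; since $|(-1)-c|=1+|c|$ when $c\ge 0$, Lemma \ref{le.1}(i) then gives $\widehat{\varepsilon}\,(T,c)\ge 1+|c|$. To build such pairs, use $[T]=0$ to choose unit vectors $g_n$ with $\|Tg_n\|\to 0$, and fix a unit vector $v$ with $\|Tv\|\ge \|T\|/2>0$ (possible as $T\ne 0$). The key step is to project $v$ onto $\{g_n\}^{\perp}$: put $v_n:=v-\langle v,g_n\rangle g_n$ and $e_n:=v_n/\|v_n\|$. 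Because $\|Tv_n\|\ge\|Tv\|-|\langle v,g_n\rangle|\,\|Tg_n\|\ge\|T\|/2-\|Tg_n\|$ and $\|v_n\|\le\|v\|=1$, we get $v_n\ne 0$ for large $n$ and $\|Te_n\|\ge\|T\|/2-\|Tg_n\|$; together with $\|Te_n\|\le\|T\|$ this shows $\|Te_n\|$ stays bounded away from $0$ and $\infty$. Thus $e_n\perp g_n$ are unit vectors with $\|Te_n\|$ bounded below and $\|Tg_n\|\to 0$.

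Now apply Lemma \ref{le.2}(i) to the orthonormal pair $(e_n,g_n)$: with $\lambda:=\sqrt{\tfrac{1+c}{1-c}}$, the vectors $X_n:=e_n+\lambda g_n$ and $Y_n:=-e_n+\lambda g_n$ satisfy $X_n\,\angle_c\,Y_n$, and $X_n,Y_n\ne 0$ since $\|X_n\|^2=\|Y_n\|^2=1+\lambda^2$. A direct computation in the real inner product, in which the cross terms cancel, gives $\langle TX_n,TY_n\rangle=\lambda^2\|Tg_n\|^2-\|Te_n\|^2$ and $\|TX_n\|^2=\|Te_n\|^2+2\lambda\langle Te_n,Tg_n\rangle+\lambda^2\|Tg_n\|^2$, with $\|TY_n\|^2$ the same but with $-2\lambda\langle Te_n,Tg_n\rangle$. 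Since $\|Tg_n\|^2\to 0$ and $|\langle Te_n,Tg_n\rangle|\le\|T\|\,\|Tg_n\|\to 0$, while $\|Te_n\|^2$ is bounded away from $0$, one obtains $\langle TX_n,TY_n\rangle=-\|Te_n\|^2+o(1)$ and $\|TX_n\|^2=\|TY_n\|^2=\|Te_n\|^2+o(1)$; hence $TX_n,TY_n\ne 0$ for large $n$ and $\tfrac{\langle TX_n,TY_n\rangle}{\|TX_n\|\,\|TY_n\|}\to -1$. Consequently $\big|\tfrac{\langle TX_n,TY_n\rangle}{\|TX_n\|\,\|TY_n\|}-c\big|\to 1+|c|$, and since each such quantity is dominated by $\widehat{\varepsilon}\,(T,c)$ via Lemma \ref{le.1}(i), we conclude $\widehat{\varepsilon}\,(T,c)\ge 1+|c|$, hence $\widehat{\varepsilon}\,(T,c)=1+|c|$.

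I expect the only genuine obstacle to be the construction in the second paragraph: the natural but wrong move is to fix a ``large'' direction and project the small vector $g_n$ onto its orthogonal complement, which can ruin smallness; projecting the \emph{large} vector $v$ onto $\{g_n\}^{\perp}$ instead is precisely what keeps $\|Te_n\|$ bounded below, because the correction term $\langle v,g_n\rangle Tg_n$ has norm $\le\|Tg_n\|\to 0$. Everything else reduces to the routine estimate of the three displayed inner products and is valid verbatim when $\mathscr{H}$ and $\mathscr{K}$ are infinite-dimensional.
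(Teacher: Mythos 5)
Your proof is correct, and it takes a genuinely different route from the paper's. The paper splits into two cases: if $T$ is not injective it picks $y\in\ker T$ and $x\perp y$ and applies Lemma \ref{le.2}(i) to get the value $1+c$ exactly; if $T$ is injective it argues by contradiction, invoking Corollary \ref{cr.3.5} (hence Lemma \ref{le.3}) and Lemma \ref{le.2}(ii) to derive the quantitative bound $\sqrt{\tfrac{(1-c)(1+c-\varepsilon_0)}{(1+c)(1-c+\varepsilon_0)}}\,\|T\|\leq[T]$, which is incompatible with $[T]=0$. You instead give a single unified construction: unit vectors $g_n$ with $\|Tg_n\|\to0$, a ``large'' direction $v$ projected onto $\{g_n\}^{\perp}$ to produce orthonormal pairs $(e_n,g_n)$ with $\|Te_n\|$ bounded below, and then Lemma \ref{le.2}(i) yields $\angle_c$-pairs whose image cosines tend to $-1$, so the supremum in Lemma \ref{le.1}(i) is $1+|c|$. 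Your observation that one must project the large vector onto the orthogonal complement of the small one (not vice versa) is exactly the right point, and your cross-term estimates check out. What your approach buys: it needs only Lemma \ref{le.2}(i) and elementary estimates, avoids the case split and the basis result of Lemma \ref{le.3} entirely, and exhibits the supremum as a genuine limit rather than arguing by contradiction. What the paper's approach buys: its Case 2 computation is precisely the quantitative inequality reused later in Theorem \ref{th.4} and Corollary \ref{cr.4.5}, so the contradiction argument is not wasted work. One small caveat: the theorem hypothesizes only that $T$ is a nonzero \emph{linear} map, and you implicitly assume $\|T\|<\infty$ when you choose $v$ with $\|Tv\|\geq\|T\|/2$ and when you bound $|\langle Te_n,Tg_n\rangle|\leq\|T\|\,\|Tg_n\|$; this is repaired trivially by fixing any $v$ with $Tv\neq0$ and using Cauchy--Schwarz in the form $|\langle Te_n,Tg_n\rangle|\leq\|Te_n\|\,\|Tg_n\|$ before dividing through by $\|Te_n\|^2$, so it does not affect the validity of your argument.
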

\begin{proof}
Since $\widehat{\varepsilon}\,(T, - c) = \widehat{\varepsilon}\,(T, c)$,
we may assume that $c\in[0, 1)$. We consider two cases.\\

Case 1. $T$ is not injective.

Then there exists a subspace $\mathscr{H}_1$ such that $2\leq\dim \mathscr{H}_1< \infty$
and $T|_{\mathscr{H}_1}$ is not injective, i.e., $\{0\} \neq \ker(T|_{\mathscr{H}_1}) \neq \mathscr{H}_1$.
(Indeed, if $T$ is injective on every finite-dimensional subspace, then $T$ has to be injective.)
Since the set $\ker(T|_{\mathscr{H}_1})$ is not dense,
so we can find two vectors
$x\in \big(\ker(T|_{\mathscr{H}_1})\big)^\perp$, $y\in \ker(T|_{\mathscr{H}_1})$ such that
$\|x\| = \|y\| = 1$ and $x\perp y$. Now, by Lemma \ref{le.2} (i),
$(x + \sqrt{\frac{1 + c}{1 - c}} y)\, \angle_c \,(- x + \sqrt{\frac{1 + c}{1 - c}} y)$. We have
\begin{align*}
\left|\frac{\langle T(x + \sqrt{\frac{1 + c}{1 - c}} y) , T(- x + \sqrt{\frac{1 + c}{1 - c}} y)\rangle}
{\left\|T(x + \sqrt{\frac{1 + c}{1 - c}} y)\right\|\,\left\|T(- x + \sqrt{\frac{1 + c}{1 - c}} y)\right\|} - c \right|
= \left|\frac{-\|Tx\|^2}{\|Tx\|^2} - c \right| = 1 + c.
\end{align*}
Thus, by Lemma \ref{le.1} (i), $\widehat{\varepsilon}\,(T, c) = 1 + c $. \\

Case 2. $T$ is injective.

Assume that $\widehat{\varepsilon}\,(T, c) < 1 + c $. Then there exists $\varepsilon_0 < 1 + c$
such that $T$ is an $(\varepsilon_0, c)$-angle preserving mapping. Now, consider arbitrarily unit vectors
$x, y \in \mathscr{H}$. If $x$ and $y$ are linearly dependent, then
$\sqrt{\frac{(1 - c)(1 + c - \varepsilon_0)}{(1 + c)(1 - c + \varepsilon_0)}}\|Ty\| \leq \|Tx\|$ obviously holds.
Also, if $x$ and $y$ are linearly independent, then by Corollary \ref{cr.3.5},
there exist unit vectors $x_1, x_2$ such that
\begin{align}\label{id.3.1}
x_1\perp x_2, \quad Tx_1 \perp Tx_2,  \quad \|Tx_1\| \leq \|Tx\|\leq \|Tx_2\|
\quad \mbox{and} \quad \|Tx_1\| \leq \|Ty\| \leq \|Tx_2\|.
\end{align}
So, by Lemma \ref{le.2} (ii), $\Big(x_1 + \sqrt{\frac{1 - c}{1 + c}} x_2\Big)\, \angle_c \,\Big(x_1 - \sqrt{\frac{1 - c}{1 + c}} x_2\Big)$,
whence
$$T\left(x_1 + \sqrt{\frac{1 - c}{1 + c}} x_2\right)\, \angle_c^{\varepsilon_0} \,T\left(x_1 - \sqrt{\frac{1 - c}{1 + c}} x_2\right).$$
Let us put $u = x_1 + \sqrt{\frac{1 - c}{1 + c}} x_2$ and $v = x_1 - \sqrt{\frac{1 - c}{1 + c}} x_2$.
Therefore
\begin{align*}
-\|Tx_1\|^2  + \frac{1 - c}{1 + c} \|Tx_2\|^2 &+ c\left(\|Tx_1\|^2  + \frac{1 - c}{1 + c} \|Tx_2\|^2\right)
\\& \leq \left|\|Tx_1\|^2  - \frac{1 - c}{1 + c} \|Tx_2\|^2
- c\left(\|Tx_1\|^2  + \frac{1 - c}{1 + c} \|Tx_2\|^2\right)\right|
\\ & = \Big|\langle Tu , Tv \rangle - c\|Tu\|\,\|Tv\|\Big|
\\ & \leq \varepsilon_0\|Tu\|\,\|Tv\|
\\& = \varepsilon_0\left\|T\Big(x_1 + \sqrt{\frac{1 - c}{1 + c}} x_2\Big)\right\|\,
\left\|T\Big(x_1 - \sqrt{\frac{1 - c}{1 + c}} x_2\Big)\right\|
\\ & \leq \varepsilon_0\left(\|Tx_1\|^2  + \frac{1 - c}{1 + c} \|Tx_2\|^2\right).
\end{align*}
Hence
$$-\|Tx_1\|^2  + \frac{1 - c}{1 + c} \|Tx_2\|^2 + c\left(\|Tx_1\|^2  + \frac{1 - c}{1 + c} \|Tx_2\|^2\right)
\leq \varepsilon_0\left(\|Tx_1\|^2  + \frac{1 - c}{1 + c} \|Tx_2\|^2\right),$$
or equivalently,
\begin{align}\label{id.3.2}
\sqrt{\frac{(1 - c)(1 + c - \varepsilon_0)}{(1 + c)(1 - c + \varepsilon_0)}}\|Tx_2\| \leq \|Tx_1\|.
\end{align}
Employing (\ref{id.3.1}) and (\ref{id.3.2}) we reach
$$\sqrt{\frac{(1 - c)(1 + c - \varepsilon_0)}{(1 + c)(1 - c + \varepsilon_0)}}\|Ty\|
\leq \sqrt{\frac{(1 - c)(1 + c - \varepsilon_0)}{(1 + c)(1 - c + \varepsilon_0)}}\|Tx_2\| \leq \|Tx_1\| \leq \|Tx\|.$$
By passing to the supremum over $y$ and passing to the infimum over $x$ in the above inequality we obtain
$\sqrt{\frac{(1 - c)(1 + c - \varepsilon_0)}{(1 + c)(1 - c + \varepsilon_0)}}\|T\| \leq [T]$.
Since $\|T\| > 0$ and $[T] = 0$ we get $\varepsilon_0 = 1 + c$.
This contradiction shows that $\widehat{\varepsilon}\,(T, c) = 1 + c $.
\end{proof}
Next, we formulate one of our main results.
\begin{theorem}\label{th.4}
Let $c\in (-1,1)$. Suppose that $T\in \mathbb{B}(\mathscr{H}, \mathscr{K})$ and $[T] \neq 0$. Then
$$\widehat{\varepsilon}\,(T, c) =
\frac{(1 - |c|^2)(\|T\|^2 - [T]^2)}{(1 + |c|)\|T\|^2 + (1 - |c|)[T]^2}.$$
\end{theorem}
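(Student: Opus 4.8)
The plan is to prove the identity by a two-sided estimate, in both directions reducing the computation to a single two-dimensional section of $T$. By the symmetry $\widehat{\varepsilon}\,(T,-c)=\widehat{\varepsilon}\,(T,c)$ recorded after Definition \ref{df.2} we may assume $c\in[0,1)$, and since $[T]\neq 0$ the operator $T$ is bounded below, hence injective; in particular $Tx$ and $Ty$ are linearly independent whenever $x$ and $y$ are.

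For the lower bound the aim is to manufacture explicit $\angle_c$-related pairs whose images almost realise the right-hand side. Given $\eta>0$, pick unit vectors $p,q$ with $\|Tp\|$ within $\eta$ of $[T]$ and $\|Tq\|$ within $\eta$ of $\|T\|$, and let $V=\operatorname{span}\{p,q\}$ (which is finite-dimensional; if $\dim V=1$ the right-hand side is itself within $O(\eta)$ of $0$ and nothing is to be shown). Applying Lemma \ref{le.3} to $T|_V$ yields an orthonormal pair $x_1,x_2$ with $Tx_1\perp Tx_2$, $\|Tx_1\|=[T|_V]$ close to $[T]$, and $\|Tx_2\|=\|T|_V\|$ close to $\|T\|$. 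Substituting $x_1,x_2$ into Lemma \ref{le.2} produces vectors $u$ and $v$ with $u\,\angle_c\,v$, and since $Tx_1\perp Tx_2$ the quantities $\langle Tu,Tv\rangle$ and $\|Tu\|=\|Tv\|$ are explicit quadratics in $\|Tx_1\|^{2},\|Tx_2\|^{2}$; evaluating $\big|\tfrac{\langle Tu,Tv\rangle}{\|Tu\|\,\|Tv\|}-c\big|$ and letting $\eta\to 0$ shows, via Lemma \ref{le.1}(i), that $\widehat{\varepsilon}\,(T,c)$ is at least the displayed expression.

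For the matching upper bound fix unit vectors $x,y$ with $x\,\angle_c\,y$; as $c<1$ they are linearly independent, so $S:=\operatorname{span}\{x,y\}$ is two-dimensional and $T|_S$ has singular values $\sigma_1\geq\sigma_2$ with $[T]\leq\sigma_2\leq\sigma_1\leq\|T\|$. Two facts pin down the angle between $Tx$ and $Ty$: since the Gram determinant of $\{Tx,Ty\}$ equals $\sigma_1^{2}\sigma_2^{2}$ times that of $\{x,y\}$, one has $\|Tx\|^{2}\|Ty\|^{2}-\langle Tx,Ty\rangle^{2}=(1-c^{2})\,\sigma_1^{2}\sigma_2^{2}$; and feeding $z=x\pm y$ into the inequalities $\sigma_2\|z\|\leq\|Tz\|\leq\sigma_1\|z\|$ valid on $S$ confines $h:=\langle Tx,Ty\rangle$ to an interval whose endpoints are $\tfrac12\big(\sigma_i^{2}(1+c)-\sigma_j^{2}(1-c)\big)$ for $\{i,j\}=\{1,2\}$. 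Consequently $\frac{\langle Tx,Ty\rangle}{\|Tx\|\,\|Ty\|}=h/\sqrt{h^{2}+(1-c^{2})\sigma_1^{2}\sigma_2^{2}}$, a strictly increasing function of $h$, so its distance from $c$ is attained at an endpoint; there the expression under the radical is a perfect square, the ratio collapses to $\frac{\sigma_i^{2}(1+c)-\sigma_j^{2}(1-c)}{\sigma_i^{2}(1+c)+\sigma_j^{2}(1-c)}$, and maximising its distance from $c$ over $\{i,j\}=\{1,2\}$ and over the admissible $\sigma_1,\sigma_2$ — which in the limit forces $\sigma_1=\|T\|$ and $\sigma_2=[T]$ — produces the closed form in the statement. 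Together with Lemma \ref{le.1}, this proves the theorem.

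I expect the upper bound to be the main obstacle, and within it the crucial realisation is that one should estimate the single scalar $\langle Tx,Ty\rangle$ directly — via $z=x\pm y$ in the defining inequalities for $\|T\|$ and $[T]$, refined to the singular values of $T|_S$ — rather than the two norms $\|Tx\|,\|Ty\|$ separately as in Corollary \ref{cr.3.5}. Once the problem is compressed to the one-variable function $h\mapsto h/\sqrt{h^{2}+(1-c^{2})\sigma_1^{2}\sigma_2^{2}}$, the monotonicity step is immediate, but the endpoint evaluation — simplifying the radical and then selecting the index pair and the values of $(\sigma_1,\sigma_2)$ that maximise the distance from $c$ — must be carried out carefully to land on the stated formula.
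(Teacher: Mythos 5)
Your lower bound is essentially the paper's own argument (with the welcome extra care of passing to a two--dimensional section on which $[T]$ and $\|T\|$ are nearly attained, a point the paper glosses over by invoking Lemma~\ref{le.3} as if the extrema were attained). Your upper bound is a genuinely different route, and the machinery is correct as far as it goes: the Gram--determinant identity $\|Tx\|^2\|Ty\|^2-\langle Tx,Ty\rangle^2=(1-c^2)\sigma_1^2\sigma_2^2$ together with the $z=x\pm y$ estimates does confine $h=\langle Tx,Ty\rangle$ to $\bigl[\tfrac12\bigl(\sigma_2^{2}(1+c)-\sigma_1^{2}(1-c)\bigr),\ \tfrac12\bigl(\sigma_1^{2}(1+c)-\sigma_2^{2}(1-c)\bigr)\bigr]$ and hence the cosine to $\bigl[\tfrac{\sigma_2^2(1+c)-\sigma_1^2(1-c)}{\sigma_2^2(1+c)+\sigma_1^2(1-c)},\ \tfrac{\sigma_1^2(1+c)-\sigma_2^2(1-c)}{\sigma_1^2(1+c)+\sigma_2^2(1-c)}\bigr]$. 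The gap is in your final sentence: the maximum of the distance from $c$ over the two endpoints is \emph{not} the displayed formula. The deviations at the two endpoints are $\tfrac{(1-c^2)(\sigma_1^2-\sigma_2^2)}{(1+c)\sigma_1^2+(1-c)\sigma_2^2}$ (above $c$) and $\tfrac{(1-c^2)(\sigma_1^2-\sigma_2^2)}{(1-c)\sigma_1^2+(1+c)\sigma_2^2}$ (below $c$); for $c>0$ and $\sigma_1>\sigma_2$ the second denominator is the smaller one, so the binding endpoint is $h_-$, and maximising over $\sigma_1\le\|T\|$, $\sigma_2\ge[T]$ yields $\tfrac{(1-c^2)(\|T\|^2-[T]^2)}{(1-|c|)\|T\|^2+(1+|c|)[T]^2}$, which is strictly larger than the right-hand side of the theorem whenever $c\neq0$ and $\|T\|>[T]$.

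This is not a slip you can patch, because your method, carried out correctly, shows the stated identity is itself false. Take $T=\mathrm{diag}(1,\tfrac12)$ on $\mathbb{R}^2$, $c=\tfrac12$, $x=(\tfrac12,\tfrac{\sqrt3}{2})$, $y=(-\tfrac12,\tfrac{\sqrt3}{2})$. Then $\langle x,y\rangle=\tfrac12$, so $x\,\angle_c\,y$, while $\langle Tx,Ty\rangle=-\tfrac{1}{16}$ and $\|Tx\|\,\|Ty\|=\tfrac{7}{16}$, so $\bigl|\tfrac{\langle Tx,Ty\rangle}{\|Tx\|\,\|Ty\|}-c\bigr|=\tfrac{9}{14}$, whereas the theorem predicts $\widehat{\varepsilon}\,(T,\tfrac12)=\tfrac{9}{26}$. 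In fact the value $\tfrac{(1-c^2)(\|T\|^2-[T]^2)}{(1-|c|)\|T\|^2+(1+|c|)[T]^2}$ is approached in general by running the paper's own lower-bound construction with the roles of $x_1$ and $x_2$ interchanged in Lemma~\ref{le.2}(ii), so that flipped denominator is the correct closed form. For comparison, the paper's upper-bound argument manipulates $\bigl\|\tfrac{Tx}{\|Tx\|}\pm\tfrac{Ty}{\|Ty\|}\bigr\|^2$ and only ever bounds $\langle\tfrac{Tx}{\|Tx\|},\tfrac{Ty}{\|Ty\|}\rangle-c$ from above, never $c-\langle\tfrac{Tx}{\|Tx\|},\tfrac{Ty}{\|Ty\|}\rangle$, which is exactly where the larger deviation lives; your Gram-determinant approach is the one that exposes the error, provided you select the correct extremal endpoint.
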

\begin{proof}
We may assume that $c\in[0, 1)$. Since $[T]>0$, there exist unit vectors $x_1, x_2$ such that
\begin{align}\label{id.3.3}
x_1\perp x_2, \quad Tx_1 \perp Tx_2, \quad [T] = \|Tx_1\| \quad \mbox{and} \quad \|Tx_2\| = \|T\|.
\end{align}
It follows from Lemma \ref{le.2} (ii), that $\Big(x_2 + \sqrt{\frac{1 - c}{1 + c}} x_1\Big)\,
\angle_c \,\Big(x_2 - \sqrt{\frac{1 - c}{1 + c}} x_1\Big)$. Let us put
$u = x_2 + \sqrt{\frac{1 - c}{1 + c}} x_1$ and $v = x_2 - \sqrt{\frac{1 - c}{1 + c}} x_1$.
Therefore, by (\ref{id.3.3}), we have
\begin{align*}
\left|\frac{\langle Tu , Tv\rangle}{\|Tu\|\,\|Tv\|} - c\right|
& = \left|\frac{\|Tx_2\|^2 - \frac{1 - c}{1 + c}\|Tx_1\|^2}{\|Tx_2\|^2 + \frac{1 - c}{1 + c}\|Tx_1\|^2} - c\right|
\\& = \left|\frac{\|T\|^2 - \frac{1 - c}{1 + c}[T]^2}{\|T\|^2 + \frac{1 - c}{1 + c}[T]^2} - c\right|
\\& = \left|\frac{(1 + c)\|T\|^2 - (1 - c)[T]^2}{(1 + c)\|T\|^2 + (1 - c)[T]^2} - c\right|
\\& = \frac{(1 - c^2)(\|T\|^2 - [T]^2)}{(1 + c)\|T\|^2 + (1 - c)[T]^2}.
\end{align*}
Now, Lemma \ref{le.1} (i) yields that
\begin{align}\label{id.3.110}
\widehat{\varepsilon}\,(T, c) \geq \frac{(1 - c^2)(\|T\|^2 - [T]^2)}{(1 + c)\|T\|^2 + (1 - c)[T]^2}.
\end{align}
On the other hand, let $x, y\in \mathscr{H}$ such that $x\, \angle_c \, y$ and $\|x\| = \|y\| = 1$.
We have
\begin{align*}
\left\|\frac{Tx}{\|Tx\|} + \frac{Ty}{\|Ty\|}\right\|^2 &
= \left\|T\left(\frac{x}{\|Tx\|} + \frac{y}{\|Ty\|}\right)\right\|^2
\\ & \leq \|T\|^2\left\|\frac{x}{\|Tx\|} + \frac{y}{\|Ty\|}\right\|^2
\\ & = \|T\|^2\left(\frac{1}{\|Tx\|^2} + \frac{1}{\|Ty\|^2} + \frac{2c}{\|Tx\|\,\|Ty\|}\right),
\end{align*}
whence
\begin{align}\label{id.3.11}
\left\|\frac{Tx}{\|Tx\|} + \frac{Ty}{\|Ty\|}\right\|^2
\leq \|T\|^2\left(\frac{1}{\|Tx\|^2} + \frac{1}{\|Ty\|^2} + \frac{2c}{\|Tx\|\,\|Ty\|}\right).
\end{align}
Similarly,
\begin{align*}
\left\|\frac{Tx}{\|Tx\|} - \frac{Ty}{\|Ty\|}\right\|^2
\geq [T]^2\left(\frac{1}{\|Tx\|^2} + \frac{1}{\|Ty\|^2} - \frac{2c}{\|Tx\|\,\|Ty\|}\right).
\end{align*}
and
\begin{align*}
\left\|\frac{Tx}{\|Tx\|} - \frac{Ty}{\|Ty\|}\right\|^2
\leq \|T\|^2\left(\frac{1}{\|Tx\|^2} + \frac{1}{\|Ty\|^2} - \frac{2c}{\|Tx\|\,\|Ty\|}\right).
\end{align*}
Now, let
\begin{align}\label{id.3.12}
\left\|\frac{Tx}{\|Tx\|} - \frac{Ty}{\|Ty\|}\right\|^2
= \mu[T]^2\left(\frac{1}{\|Tx\|^2} + \frac{1}{\|Ty\|^2} - \frac{2c}{\|Tx\|\,\|Ty\|}\right)
\end{align}
with $1\leq \mu \leq \frac{\|T\|}{[T]}$. It follows from (\ref{id.3.11}) and (\ref{id.3.12}) that
\begin{align*}
4 & = \left\|\frac{Tx}{\|Tx\|} + \frac{Ty}{\|Ty\|}\right\|^2
+ \left\|\frac{Tx}{\|Tx\|} - \frac{Ty}{\|Ty\|}\right\|^2
\\ & \leq (\|T\|^2 + \mu[T]^2)\left(\frac{1}{\|Tx\|^2}
+ \frac{1}{\|Ty\|^2}\right) + (\|T\|^2 - \mu[T]^2)\frac{2c}{\|Tx\|\,\|Ty\|}
\\ & \leq (\|T\|^2 + \mu[T]^2)\left(\frac{1}{\|Tx\|^2}
+ \frac{1}{\|Ty\|^2}\right) + c(\|T\|^2 - \mu[T]^2)\left(\frac{1}{\|Tx\|^2} + \frac{1}{\|Ty\|^2}\right)
\\ & = \Big((1 + c)\|T\|^2 + (1 - c)\mu[T]^2\Big)\left(\frac{1}{\|Tx\|^2} + \frac{1}{\|Ty\|^2}\right).
\end{align*}
Hence,
\begin{align}\label{id.3.13}
\frac{1}{\|Tx\|^2} + \frac{1}{\|Ty\|^2} \geq \frac{4}{(1 + c)\|T\|^2 + (1 - c)\mu[T]^2}.
\end{align}
It follows from (\ref{id.3.12}) and (\ref{id.3.13}) that
\begin{align*}
\left\langle \frac{Tx}{\|Tx\|}, \frac{Ty}{\|Ty\|}\right\rangle - c &
= 1 - c - \frac{1}{2}\left\|\frac{Tx}{\|Tx\|} - \frac{Ty}{\|Ty\|}\right\|^2
\\ & = 1 - c + \frac{\mu c[T]^2}{\|Tx\|\,\|Ty\|}
- \frac{1}{2} \mu[T]^2\left(\frac{1}{\|Tx\|^2} + \frac{1}{\|Ty\|^2}\right)
\\ & \leq 1 - c + \frac{1}{2} \mu c[T]^2\left(\frac{1}{\|Tx\|^2} + \frac{1}{\|Ty\|^2}\right)
- \frac{1}{2} \mu[T]^2\left(\frac{1}{\|Tx\|^2} + \frac{1}{\|Ty\|^2}\right)
\\ & = 1 - c - \frac{1}{2}(1 - c) \mu[T]^2\left(\frac{1}{\|Tx\|^2} + \frac{1}{\|Ty\|^2}\right)
\\ & \leq 1 - c - \frac{2(1 - c) \mu[T]^2}{(1 + c)\|T\|^2 + (1 - c)\mu[T]^2}
\\ & =  \frac{(1 - c^2)\|T\|^2 - (1 - c^2)\mu[T]^2}{(1 + c)\|T\|^2 + (1 - c)\mu[T]^2}
\\&\hspace{8cm}(\mbox{since $1\leq \mu$})
\\ & \leq \frac{(1 - c^2)(\|T\|^2 - [T]^2)}{(1 + c)\|T\|^2 + (1 - c)[T]^2}.
\end{align*}
Hence
$$\sup\left\{\left|\frac{\langle Tx, Ty\rangle}{\|Tx\|\,|Ty\|} - c\right|; \,\,x\,
\angle_c\, y,\, \|x\| = \|y\| =1, \,x, y\in\mathscr{H}\right\}
\leq \frac{(1 - c^2)(\|T\|^2 - [T]^2)}{(1 + c)\|T\|^2 + (1 - c)[T]^2}.$$
Utilizing Lemma \ref{le.1}(ii), we get
\begin{align}\label{id.3.120}
\widehat{\varepsilon}\,(T, c) \leq \frac{(1 - c^2)(\|T\|^2 - [T]^2)}{(1 + c)\|T\|^2 + (1 - c)[T]^2}.
\end{align}
Thus, by (\ref{id.3.110}) and (\ref{id.3.120}), we conclude that
$\widehat{\varepsilon}\,(T, c) = \frac{(1 - c^2)(\|T\|^2 - [T]^2)}{(1 + c)\|T\|^2 + (1 - c)[T]^2}$.
\end{proof}
As an immediate consequence of Theorem \ref{th.4},
we get a characterization of the $(\varepsilon, c)$-angle preserving mappings.
\begin{corollary}\label{cr.4}
Suppose that $T\in \mathbb{B}(\mathscr{H}, \mathscr{K})\setminus\{0\}$ and $c\in (-1,1)$.
Then there exists an $\varepsilon \in [0, 1 + |c|)$ such that $T$ is an
$(\varepsilon, c)$-angle preserving mapping if and only if $T$ is bounded below.
\end{corollary}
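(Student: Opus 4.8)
The plan is to read off both implications directly from Theorem \ref{th.3} and Theorem \ref{th.4}, after first recording the routine reformulation that an admissible $\varepsilon \in [0, 1+|c|)$ with $T$ being $(\varepsilon,c)$-angle preserving exists if and only if $\widehat{\varepsilon}\,(T,c) < 1 + |c|$. One direction of this reformulation is immediate, since such an $\varepsilon$ bounds the infimum $\widehat{\varepsilon}\,(T,c)$ strictly below $1+|c|$; the other follows from the monotonicity remark preceding Definition \ref{df.2}, namely that once $T$ is $(\varepsilon_1,c)$-angle preserving it is $(\varepsilon_2,c)$-angle preserving for every $\varepsilon_1 \le \varepsilon_2 < 1+|c|$, so any number strictly between $\widehat{\varepsilon}\,(T,c)$ and $1+|c|$ is admissible. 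Recall also that ``$T$ is bounded below'' means precisely $[T] > 0$.

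For the implication that the existence of such $\varepsilon$ forces $T$ to be bounded below, I would argue by contraposition: if $[T] = 0$, then Theorem \ref{th.3} gives $\widehat{\varepsilon}\,(T,c) = 1+|c|$, so by the reformulation no $\varepsilon \in [0,1+|c|)$ can work. For the converse, assume $[T] > 0$. Then Theorem \ref{th.4} supplies the closed-form value $\widehat{\varepsilon}\,(T,c) = \frac{(1-|c|^2)(\|T\|^2-[T]^2)}{(1+|c|)\|T\|^2 + (1-|c|)[T]^2}$, and it remains only to verify the strict inequality $\widehat{\varepsilon}\,(T,c) < 1 + |c|$. Since the denominator is positive, this is equivalent to $(1-|c|^2)(\|T\|^2-[T]^2) < (1+|c|)\big((1+|c|)\|T\|^2 + (1-|c|)[T]^2\big)$; expanding, the difference of the right and left sides equals $2|c|(1+|c|)\|T\|^2 + 2(1-|c|^2)[T]^2$, which is strictly positive because $T \neq 0$ and $[T] > 0$. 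Hence $\widehat{\varepsilon}\,(T,c) < 1+|c|$, and by the reformulation $T$ is $(\varepsilon,c)$-angle preserving for some (in fact every) $\varepsilon$ with $\widehat{\varepsilon}\,(T,c) \le \varepsilon < 1+|c|$.

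There is essentially no obstacle here, since the substantive content has already been isolated in Theorems \ref{th.3} and \ref{th.4}. The only points that require a little care are the bookkeeping around the infimum/monotonicity reformulation and the short arithmetic check that the explicit formula for $\widehat{\varepsilon}\,(T,c)$ stays strictly below the saturation value $1+|c|$ the moment $[T] > 0$.
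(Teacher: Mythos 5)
Your proposal is correct and follows exactly the route the paper intends: the paper states Corollary \ref{cr.4} as an immediate consequence of Theorem \ref{th.4} without writing out details, and you supply precisely the missing bookkeeping (the infimum/monotonicity reformulation, Theorem \ref{th.3} for the case $[T]=0$, and the arithmetic check that the explicit formula stays strictly below $1+|c|$ when $[T]>0$). No issues.
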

\begin{corollary}\label{cr.4.5}
Let $c\in (-1,1)$ and $\varepsilon\in[0, 1 + |c|)$.
Suppose that $T\in \mathbb{B}(\mathscr{H}, \mathscr{K})\setminus\{0\}$
be an $(\varepsilon, c)$-angle preserving mapping.
Then $T$ is injective and the following statements hold:
\begin{enumerate}
\item[(i)] $\sqrt{\frac{(1 + |c|)(1 - |c| - \varepsilon)}{(1 - |c|)
(1 + |c| + \varepsilon)}}\|T\| \leq [T]$.\\

\item[(ii)] $\sqrt{\frac{(1 + |c|)(1 - |c| - \varepsilon)}{(1 - |c|)
(1 + |c| + \varepsilon)}}\|Tx\|\,\|y\| \leq \|Ty\|\,\|x\| \qquad (x, y\in \mathscr{H})$.\\

\item[(iii)] $\sqrt{\frac{(1 + |c|)(1 - |c| - \varepsilon)}{(1 - |c|)
(1 + |c| + \varepsilon)}}\|T\|\,\|x\| \leq \|Tx\| \leq \|T\|\,\|x\| \qquad (x\in \mathscr{H})$.
\end{enumerate}
\end{corollary}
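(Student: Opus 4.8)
The plan is to combine two facts already at our disposal: that such a $T$ is necessarily bounded below (immediate from Theorem~\ref{th.3}) and the exact value of $\widehat{\varepsilon}\,(T,c)$ furnished by Theorem~\ref{th.4}. First I would reduce to $c\in[0,1)$: replacing $y$ by $-y$ turns $\angle_c$ into $\angle_{-c}$ and $\angle_c^\varepsilon$ into $\angle_{-c}^\varepsilon$ while altering the action of $T$ only by a sign, so $T$ is $(\varepsilon,c)$-angle preserving if and only if it is $(\varepsilon,-c)$-angle preserving; and every quantity in (i)--(iii) depends on $c$ only through $|c|$. So assume $c\in[0,1)$, write $|c|=c$, and set
\[
\kappa:=\frac{(1+c)(1-c-\varepsilon)}{(1-c)(1+c+\varepsilon)}=1-\frac{2\varepsilon}{(1-c)(1+c+\varepsilon)},
\]
so that $\kappa\le1$ always, $\kappa\ge0$ precisely when $\varepsilon\le1-c$, and $\sqrt\kappa$ is the constant appearing in the statement (when $\varepsilon>1-c$ the radicand is negative, and (i)--(iii) are then to be read as vacuous, or equivalently in their squared forms with $\kappa\,\|\cdot\|^2$ in place of $\sqrt\kappa\,\|\cdot\|$).

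\emph{Injectivity.} Since $T$ is $(\varepsilon,c)$-angle preserving with $\varepsilon<1+c$, Definition~\ref{df.2} gives $\widehat{\varepsilon}\,(T,c)\le\varepsilon<1+c$; by the contrapositive of Theorem~\ref{th.3} this forces $[T]>0$, whence $Tx=0$ implies $[T]\,\|x\|\le\|Tx\|=0$, i.e. $x=0$, so $T$ is injective.

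\emph{Proof of (i).} As $[T]>0$, Theorem~\ref{th.4} applies and gives $\widehat{\varepsilon}\,(T,c)=\dfrac{(1-c^2)(\|T\|^2-[T]^2)}{(1+c)\|T\|^2+(1-c)[T]^2}$. Combining this with $\widehat{\varepsilon}\,(T,c)\le\varepsilon$ and clearing the positive denominator gives $(1-c^2)(\|T\|^2-[T]^2)\le\varepsilon\big((1+c)\|T\|^2+(1-c)[T]^2\big)$, which an elementary rearrangement turns into
\[
(1+c)(1-c-\varepsilon)\,\|T\|^2\le(1-c)(1+c+\varepsilon)\,[T]^2,
\]
that is, $[T]^2\ge\kappa\,\|T\|^2$; taking square roots yields (i), $\sqrt\kappa\,\|T\|\le[T]$.

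\emph{Proof of (iii) and (ii).} The defining inequalities of $[T]$ and $\|T\|$ read $[T]\,\|x\|\le\|Tx\|\le\|T\|\,\|x\|$; combining the left one with (i) gives (iii), $\sqrt\kappa\,\|T\|\,\|x\|\le\|Tx\|\le\|T\|\,\|x\|$. For (ii), fix $x,y\in\mathscr{H}$; using $\|Tx\|\le\|T\|\,\|x\|$ and the lower bound $\sqrt\kappa\,\|T\|\,\|y\|\le\|Ty\|$ from (iii),
\[
\sqrt\kappa\,\|Tx\|\,\|y\|\le\sqrt\kappa\,\|T\|\,\|x\|\,\|y\|=\big(\sqrt\kappa\,\|T\|\,\|y\|\big)\|x\|\le\|Ty\|\,\|x\|.
\]
I do not expect any serious obstacle here: all the real work is already contained in Theorems~\ref{th.3} and~\ref{th.4}, and the only point that needs attention is the sign of $1-|c|-\varepsilon$, i.e. whether $\sqrt\kappa$ is a genuine real constant, which is why the substantive content of (i)--(iii) lives in the range $\varepsilon\le1-|c|$. (If one prefers to avoid invoking Theorem~\ref{th.4}, one can instead argue directly, mirroring the proof of Theorem~\ref{th.3}: feed the orthonormal pair $x_1,x_2$ produced by Corollary~\ref{cr.3.5} into Lemma~\ref{le.2}(i) to produce a pair of vectors related by $\angle_c$, apply the $(\varepsilon,c)$-condition to their images, deduce $\|Tx_1\|^2\ge\kappa\,\|Tx_2\|^2$, and chain with $\|Tx_1\|\le\|Tx\|\le\|Tx_2\|$ and $\|Tx_1\|\le\|Ty\|\le\|Tx_2\|$.)
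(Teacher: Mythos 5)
Your proposal is correct and follows essentially the same route as the paper: injectivity via Theorem \ref{th.3}, part (i) by rearranging the exact formula of Theorem \ref{th.4} against the bound $\widehat{\varepsilon}\,(T,c)\le\varepsilon$, and (ii)--(iii) from $[T]\,\|x\|\le\|Tx\|\le\|T\|\,\|x\|$. Your added observation about the sign of $1-|c|-\varepsilon$ (the statement being vacuous when $\varepsilon>1-|c|$) is a fair point the paper glosses over, but it does not change the argument.
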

\begin{proof}
Since $T$ is $(\varepsilon, c)$-angle preserving mapping we have $\widehat{\varepsilon}\,(T, c) < 1 + |c|$.
Theorem \ref{th.3}, ensures that $T$ is injective. From Theorem \ref{th.4}, we reach
$$\widehat{\varepsilon}\,(T, c) =
\frac{(1 - |c|^2)(\|T\|^2 - [T]^2)}{(1 + |c|)\|T\|^2 + (1 - |c|)[T]^2} \leq \varepsilon$$
or equivalently,
$$\sqrt{\frac{(1 + |c|)(1 - |c| - \varepsilon)}{(1 - |c|)(1 + |c| + \varepsilon)}}\|T\| \leq [T].$$
For $x, y\in \mathscr{H}$, from the above inequality, we obtain
\begin{align*}
\sqrt{\frac{(1 + |c|)(1 - |c| - \varepsilon)}{(1 - |c|)(1 + |c| + \varepsilon)}}\|Tx\|\,\|y\|
&\leq \sqrt{\frac{(1 + |c|)(1 - |c| - \varepsilon)}{(1 - |c|)(1 + |c| + \varepsilon)}}\|T\|\,\|x\|\,\|y\|
\\& \leq [T]\,\|y\|\,\|x\| \leq \|Ty\|\,\|x\|.
\end{align*}
and
\begin{align*}
\sqrt{\frac{(1 + |c|)(1 - |c| - \varepsilon)}{(1 - |c|)(1 + |c| + \varepsilon)}}\|T\|\,\|x\|
\leq [T]\,\|x\|\leq \|Tx\| \leq \|T\|\,\|x\|.
\end{align*}
\end{proof}
\begin{corollary}\label{cr.5}
Let $c\in (-1,1)$. For $T, S\in \mathbb{B}(\mathscr{H})\setminus\{0\}$ the following statements hold:
\begin{enumerate}
\item[(i)] If $T, S$ are left invertible, then $\widehat{\varepsilon}\,(ST, c) < 1 + |c|$.\\

\item[(ii)] If $S$ is a scalar multiple of an isometry, then
$\widehat{\varepsilon}\,(ST, c) = \widehat{\varepsilon}\,(T, c)$.\\

\item[(iii)] If $T^{-1}\in \mathbb{B}(\mathscr{H})\setminus\{0\}$,
then $\widehat{\varepsilon}\,(T^{-1}, c) = \widehat{\varepsilon}\,(T, c)$.
\end{enumerate}
\end{corollary}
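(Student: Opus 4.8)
The plan is to reduce all three statements to the explicit description of $\widehat{\varepsilon}\,(\cdot, c)$ obtained in Theorems \ref{th.3} and \ref{th.4}: for a nonzero $R\in\mathbb{B}(\mathscr{H},\mathscr{K})$ one has $\widehat{\varepsilon}\,(R, c)=1+|c|$ if $[R]=0$, and
\[
\widehat{\varepsilon}\,(R, c)=\frac{(1-|c|^2)(\|R\|^2-[R]^2)}{(1+|c|)\|R\|^2+(1-|c|)[R]^2}
\]
if $[R]>0$; in particular this value depends only on the ratio $\|R\|/[R]$. Thus everything comes down to tracking the norm and the minimum modulus under the three operations, together with the already noted identity $\widehat{\varepsilon}\,(\alpha R, c)=\widehat{\varepsilon}\,(R, c)$ for $\alpha\neq0$.

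For (i), a bounded operator that is left invertible is bounded below: if $LT=I$ then $\|x\|=\|LTx\|\leq\|L\|\,\|Tx\|$ gives $[T]\geq\|L\|^{-1}>0$, and similarly $[S]>0$. Since the minimum modulus is submultiplicative, $\|STx\|\geq[S]\,\|Tx\|\geq[S]\,[T]\,\|x\|$, we get $[ST]\geq[S][T]>0$, so $ST$ is a nonzero operator which is bounded below. Corollary \ref{cr.4} then provides an $\varepsilon\in[0,1+|c|)$ for which $ST$ is $(\varepsilon,c)$-angle preserving, whence $\widehat{\varepsilon}\,(ST, c)\leq\varepsilon<1+|c|$. (Alternatively, one checks directly from the formula of Theorem \ref{th.4} that its value stays strictly below $1+|c|$ once $[ST]>0$.)

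For (ii), write $S=\gamma U$ with $\gamma>0$ and $U$ an isometry (replacing $U$ by $-U$ if necessary), so that $\|Sz\|=\gamma\|z\|$ for every $z$ and hence $\|STx\|=\gamma\|Tx\|$ for all $x$. If $[T]=0$, then $[ST]=\gamma[T]=0$ while $ST\neq0$, and Theorem \ref{th.3} gives $\widehat{\varepsilon}\,(ST, c)=1+|c|=\widehat{\varepsilon}\,(T, c)$. If $[T]>0$, then $\|ST\|=\gamma\|T\|$ and $[ST]=\gamma[T]>0$, so $\|ST\|/[ST]=\|T\|/[T]$ and the scale-invariance of the formula in Theorem \ref{th.4} yields $\widehat{\varepsilon}\,(ST, c)=\widehat{\varepsilon}\,(T, c)$. (One could also argue straight from Lemma \ref{le.1}(i), since $\langle STx,STy\rangle=\gamma^2\langle Tx,Ty\rangle$ and $\|STx\|=\gamma\|Tx\|$ make the two quotients appearing in the supremum coincide.)

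For (iii), the hypothesis says $T$ is invertible, so the standard identities $[T]=\|T^{-1}\|^{-1}>0$ and $[T^{-1}]=\|T\|^{-1}>0$ hold; applying Theorem \ref{th.4} to $T^{-1}$ and then multiplying numerator and denominator by $[T]^2\|T\|^2$,
\begin{align*}
\widehat{\varepsilon}\,(T^{-1}, c)
&=\frac{(1-|c|^2)\bigl(1/[T]^2-1/\|T\|^2\bigr)}{(1+|c|)/[T]^2+(1-|c|)/\|T\|^2}\\
&=\frac{(1-|c|^2)(\|T\|^2-[T]^2)}{(1+|c|)\|T\|^2+(1-|c|)[T]^2}=\widehat{\varepsilon}\,(T, c).
\end{align*}
I do not anticipate any real obstacle; the whole argument is bookkeeping of $\|\cdot\|$ and $[\cdot]$ under products and inverses (submultiplicativity of the minimum modulus in (i), exact scaling by a scalar isometry in (ii), and the inversion identities in (iii)). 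The one point needing care is the degenerate case $[T]=0$ in part (ii), where Theorem \ref{th.4} does not apply and Theorem \ref{th.3} must be used instead, after noting that the relevant product is still a nonzero map.
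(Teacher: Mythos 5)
Your proof is correct and follows essentially the same route as the paper: reduce everything to the formulas of Theorems \ref{th.3} and \ref{th.4} and track $\|\cdot\|$ and $[\cdot]$ under products, scalar isometries, and inversion. The only difference is that you are slightly more careful than the paper's one-line argument for (ii), explicitly covering the degenerate case $[T]=0$ via Theorem \ref{th.3}; this is a genuine (if small) improvement in rigor, not a different method.
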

\begin{proof}
(i) Since $T$ and $S$ are left invertible, we get $[TS]\geq [T]\,[S]>0$.
Hence, by Theorem \ref{th.4}, $\widehat{\varepsilon}\,(TS, c)< 1 + |c|$.\\
(ii) This is because of $\|S\| = [S]$, $\|ST\| = \|S\|\,\|T\|$ and $[ST] = [S]\,[T]$.\\
(iii) This is due to $\|T^{-1}\| = \frac{1}{[T]}$ and $[T^{-1}] = \frac{1}{\|T\|}$.
\end{proof}
Here is another property of the function $\widehat{\varepsilon}\,(., c)$.
\begin{corollary}\label{cr.6}
Let $c\in (-1,1)$. The function $T\mapsto \widehat{\varepsilon}\,(T, c)$ is norm continuous
at each $T\in\mathbb{B}(\mathscr{H}, \mathscr{K})$ with $[T] > 0$.
\end{corollary}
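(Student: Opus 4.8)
The plan is to deduce the continuity of $\widehat{\varepsilon}(\cdot,c)$ at such a point directly from the closed-form expression obtained in Theorem \ref{th.4}, by checking that the two scalar quantities occurring in it, namely $\|T\|$ and $[T]$, depend continuously on $T$ and that the expression is a continuous function of them on the region actually visited near $T$.

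First I would record the elementary Lipschitz estimates
$$\bigl|\,\|T\| - \|S\|\,\bigr| \le \|T - S\|, \qquad \bigl|\,[T] - [S]\,\bigr| \le \|T - S\| \qquad (T, S \in \mathbb{B}(\mathscr{H},\mathscr{K})).$$
The first is the reverse triangle inequality for the operator norm. For the second, for every unit vector $x\in\mathscr{H}$ one has $\|Tx\| \ge \|Sx\| - \|(T-S)x\| \ge [S] - \|T-S\|$, and taking the infimum over unit $x$ gives $[T] \ge [S] - \|T-S\|$; interchanging the roles of $T$ and $S$ yields the claim. In particular both $T\mapsto\|T\|$ and $T\mapsto[T]$ are (globally) continuous on $\mathbb{B}(\mathscr{H},\mathscr{K})$.

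Next I would fix $T_0$ with $[T_0] > 0$ and restrict attention to the open ball $\{T : \|T - T_0\| < [T_0]/2\}$; by the second estimate every such $T$ satisfies $[T] > [T_0]/2 > 0$, so Theorem \ref{th.4} applies and
$$\widehat{\varepsilon}(T,c) = \Phi\bigl(\|T\|,[T]\bigr), \qquad \Phi(a,b) := \frac{(1 - |c|^2)(a^2 - b^2)}{(1 + |c|)a^2 + (1 - |c|)b^2}.$$
On the set $\{(a,b) : a \ge b > 0\}$ the denominator of $\Phi$ is at least $(1-|c|)b^2 > 0$, hence on the range of $(\|T\|,[T])$ over the ball it is bounded below by $(1-|c|)[T_0]^2/4 > 0$; therefore $\Phi$ is a continuous (indeed rational) function there. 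Composing the continuous map $T\mapsto(\|T\|,[T])$ with $\Phi$ shows that $\widehat{\varepsilon}(\cdot,c)$ is continuous at $T_0$.

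There is essentially no serious obstacle: the whole argument reduces to Theorem \ref{th.4} together with the $1$-Lipschitz continuity of $\|\cdot\|$ and $[\cdot]$. The only point that needs a line of care is ensuring that the denominator of $\Phi$ stays bounded away from $0$ on an entire neighbourhood of $T_0$, which is exactly where the hypothesis $[T_0] > 0$ enters — and it is also why continuity can fail at points with $[T_0] = 0$, since by Theorem \ref{th.3} the function there jumps to the constant value $1 + |c|$ on the non-injective locus.
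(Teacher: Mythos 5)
Your proof is correct and follows essentially the same route as the paper: both deduce continuity from the closed formula of Theorem \ref{th.4} together with the continuity of $T\mapsto\|T\|$ and $T\mapsto[T]$ and the fact that the denominator stays away from zero near $T_0$. Your version is slightly more careful in explicitly verifying $[T]>[T_0]/2>0$ on a neighbourhood before invoking Theorem \ref{th.4}, a point the paper's sequential argument leaves implicit.
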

\begin{proof}
Suppose that $T_n \in \mathbb{B}(\mathscr{H}, \mathscr{K})$
such that $\lim_{n\rightarrow\infty} \|T_n - T\| = 0$.
Since $T \neq 0$, we may assume that $T_n \neq 0$ for all $n \in \mathbb{N}$. Then
$$\lim_{n\rightarrow\infty} \|T_n\| = \|T\|, \quad \lim_{n\rightarrow\infty} [T_n]
= [T] \quad \mbox{and} \quad (1 + c)\|T_n\|^2 + (1 - c)[T_n]^2 \neq 0.$$
Thus by Theorem \ref{th.4}, we get
\begin{align*}
\lim_{n\rightarrow\infty} \widehat{\varepsilon}\,(T_n, c) &= \lim_{n\rightarrow\infty}
\frac{(1 - |c|^2)(\|T_n\|^2 - [T_n]^2)}{(1 + |c|)\|T_n\|^2 + (1 - |c|)[T_n]^2}
= \frac{(1 - |c|^2)(\|T\|^2 - [T]^2)}{(1 + |c|)\|T\|^2 + (1 - |c|)[T]^2}\\
& = \widehat{\varepsilon}\,(T, c).
\end{align*}
\end{proof}
\begin{remark}
The function $\widehat{\varepsilon}\,(., c)$ is not continuous at $0$ even in the case $c = 0$.
Take any mapping $T$, which is not orthogonality preserving	. Thus $\widehat{\varepsilon}\,(T, c) \neq 0$.
Let $T_n = \frac{1}{n}T$. Then $\lim_{n\rightarrow\infty} \|T_n\| = 0$,
but for every $n$, $\widehat{\varepsilon}\,(T_n, c) = \widehat{\varepsilon}\,(T, c) \neq 0$
(see \cite[Remark 2.7]{Z.C.H.K}).
\end{remark}
Now we want to prove that every injective operator
approximately preserves orthogonality. This result will be helpful.
\begin{theorem}\label{theorem-eps-T}
Suppose that $T\in \mathbb{B}(\mathscr{H}, \mathscr{K})$ and $0<[T]\leq\|T\|$. Then $T$ satisfies
\begin{center}
$x\perp y \Longrightarrow Tx\perp^{\!\!\varepsilon_T} Ty \qquad (x, y\in \mathscr{H})$,
\end{center}
with $\varepsilon_T=1-\frac{[T]^2}{\|T\|^2}$.
\end{theorem}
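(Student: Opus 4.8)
The plan is to prove the pointwise statement directly, working one orthogonal pair at a time, without invoking the finite‑dimensional reduction lemmas. First I would dispose of the trivial situations: if $x=0$ or $y=0$, then $\langle Tx,Ty\rangle=0$ and $Tx\perp^{\varepsilon_T}Ty$ holds automatically; moreover, since $|\langle\cdot,\cdot\rangle|$ and $\|\cdot\|\,\|\cdot\|$ are homogeneous of the same degree in each variable, there is no loss of generality in normalizing $\|x\|=\|y\|=1$. So fix $x\perp y$ with $\|x\|=\|y\|=1$; the goal becomes
\[
\bigl|\langle Tx,Ty\rangle\bigr|\;\le\;\varepsilon_T\,\|Tx\|\,\|Ty\|,\qquad \varepsilon_T=1-\frac{[T]^2}{\|T\|^2}.
\]

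The key step is to extract a quadratic inequality from the lower bound defining $[T]$. For $t\in\mathbb{R}$, since $x\perp y$ we have $\|x+ty\|^2=1+t^2$, and therefore
\[
\|Tx\|^2+2t\,\langle Tx,Ty\rangle+t^2\|Ty\|^2\;=\;\|T(x+ty)\|^2\;\ge\;[T]^2\,(1+t^2)\qquad(t\in\mathbb{R}).
\]
Rearranged, this says that the quadratic $\bigl(\|Ty\|^2-[T]^2\bigr)t^2+2\langle Tx,Ty\rangle\,t+\bigl(\|Tx\|^2-[T]^2\bigr)$ is nonnegative for every real $t$. Its leading coefficient $\|Ty\|^2-[T]^2$ is nonnegative, because $\|Ty\|\ge[T]\,\|y\|=[T]$; if it is strictly positive, then nonnegativity of the quadratic forces the discriminant to be $\le 0$, while if it vanishes, then nonnegativity of the resulting affine function forces $\langle Tx,Ty\rangle=0$. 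In either case one obtains
\[
\langle Tx,Ty\rangle^2\;\le\;\bigl(\|Tx\|^2-[T]^2\bigr)\bigl(\|Ty\|^2-[T]^2\bigr).
\]

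To finish, I would replace $[T]^2$ by a multiple of $\|Tx\|^2$ and of $\|Ty\|^2$. Since $\|x\|=\|y\|=1$ we have $\|Tx\|\le\|T\|$ and $\|Ty\|\le\|T\|$, hence $\|Tx\|^2-[T]^2\le\|Tx\|^2-[T]^2\|Tx\|^2/\|T\|^2=\varepsilon_T\|Tx\|^2$ and likewise $\|Ty\|^2-[T]^2\le\varepsilon_T\|Ty\|^2$; both left‑hand sides are nonnegative, so multiplying the two inequalities gives $\langle Tx,Ty\rangle^2\le\varepsilon_T^2\|Tx\|^2\|Ty\|^2$, that is, $Tx\perp^{\varepsilon_T}Ty$. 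There is no serious obstacle here; the only points requiring a touch of care are the degenerate case $\|Ty\|=[T]$ in the discriminant argument and the (purely cosmetic) observation that it is the $[T]$‑bound, not the $\|T\|$‑bound, on $\|T(x+ty)\|^2$ that yields the stated constant. I should also remark that the result is subsumed by Theorem~\ref{th.4} at $c=0$, since $\widehat{\varepsilon}\,(T,0)=\frac{\|T\|^2-[T]^2}{\|T\|^2+[T]^2}\le\frac{\|T\|^2-[T]^2}{\|T\|^2}=\varepsilon_T$, so one could alternatively deduce the theorem in a single line; the self‑contained argument above is preferable if one wants it to be independent of that machinery.
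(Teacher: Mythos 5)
Your proof is correct, and it takes a genuinely different route from the paper's. The paper reduces to the two--dimensional subspace ${\rm span}\{x,y\}$ and invokes Corollary \ref{cr.3.5} (which rests on the basis theorem of \cite{Wo.1}, quoted as Lemma \ref{le.3}) to produce an orthonormal pair $a,b$ with $Ta\perp Tb$; it then expands $x,y$ in this pair and estimates the quotient $\frac{|\langle Tx,Ty\rangle|}{\|Tx\|\,\|Ty\|}$ explicitly in the coordinates $\alpha,\beta,\gamma,\delta$, arriving at the bound $1-\frac{\|Ta\|^2}{\|Tb\|^2}\le\varepsilon_T$. You instead apply the defining inequality $\|T(x+ty)\|^2\ge [T]^2\|x+ty\|^2$ to the line $t\mapsto x+ty$ and read off the discriminant condition, which gives the pointwise estimate $\langle Tx,Ty\rangle^2\le\bigl(\|Tx\|^2-[T]^2\bigr)\bigl(\|Ty\|^2-[T]^2\bigr)$ and then $\varepsilon_T$ after the elementary comparison $\|Tx\|,\|Ty\|\le\|T\|$. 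Your handling of the degenerate case $\|Ty\|=[T]$ and of the sign of the leading coefficient is exactly right, and the normalization step is harmless. What your argument buys is self--containedness and brevity: it needs no auxiliary basis lemma, no restriction to a finite--dimensional subspace, and it even yields a slightly sharper intermediate inequality than the statement requires. What the paper's route buys is uniformity with the rest of its machinery (the same Corollary \ref{cr.3.5} drives Theorem \ref{th.3}) and an estimate expressed through the local quantities $\|Ta\|,\|Tb\|$ on ${\rm span}\{x,y\}$, which can be strictly better than the global $[T],\|T\|$ for a given pair. Your closing observation that the result also follows from Theorem \ref{th.4} with $c=0$, since $\widehat{\varepsilon}\,(T,0)=\frac{\|T\|^2-[T]^2}{\|T\|^2+[T]^2}\le\varepsilon_T$, is likewise correct.
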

\begin{proof}
Fix arbitrarily two nonzero vectors $x,y\in\mathscr{H}$
such that $x\perp y$. Since $0<[T]$, $T$ is injective.
So, it follows from Corollary \ref{cr.3.5} that
there exist unit vectors $a,b\in {\rm span}\{x,y\}$ such that
\begin{equation}\label{eq-a-b-Ta-Tb}
a\perp b, \quad Ta \perp Tb, \quad \|Ta\| \leq \|Tx\| \leq \|Tb\|
\quad \mbox{and} \quad \|Ta\| \leq \|Ty\| \leq \|Tb\|.
\end{equation}
Moreover, there exist $\alpha,\beta,\gamma,\delta\in \mathbb{R}$ such
that $x=\alpha a+\beta b$, $y=\gamma a+\delta b$. Since $x\bot y$, we have
\begin{equation}\label{eq-alpha-beta-gamma-delta}
\alpha\gamma=-\beta\delta .
\end{equation}
Furthermore,  $Tx=\alpha Ta+\beta Tb$,
$Ty=\gamma Ta+\delta Tb$. If $\alpha\beta\gamma\delta=0$, then it is easy to see that
$\langle Tx,Ty\rangle=0$. Thus, in particular, we get
$Tx\bot^{\varepsilon_T} Ty$. So, now suppose that $\alpha\beta\gamma\delta\neq 0$. Let us denote
$\theta:=\frac{\alpha}{\beta}=-\frac{\delta}{\gamma}$.
It follows from (\ref{eq-a-b-Ta-Tb}) and (\ref{eq-alpha-beta-gamma-delta}) that
\begin{align*}
\frac{|\langle Tx,Ty\rangle|}{\|Tx\|\,\|Ty\|}&=\frac{\big|\
\alpha\gamma\|Ta\|^2+\beta\delta\|Tb\|^2\
\big|}{\sqrt{|\alpha|^2\|Ta\|^2+|\beta|^2\|Tb\|^2}\,\sqrt{|\gamma|^2\|Ta\|^2+|\delta|^2\|Tb\|^2}}\\
&=\frac{\left(\|Tb\|^2-\|Ta\|^2\right)|\alpha\gamma|}{\sqrt{|\alpha|^2\|Ta\|^2+|\beta|^2\|Tb\|^2}
\, \sqrt{|\gamma|^2\|Ta\|^2+|\delta|^2\|Tb\|^2}}\\
&=\!\frac{1-\frac{\|Ta\|^2}{\|Tb\|^2}}{\frac{1}{|\alpha|}\sqrt{|\alpha|^2\frac{\|Ta\|^2}{\|Tb\|^2}\!+\!|\beta|^2}
\,\frac{1}{|\gamma|}\sqrt{|\gamma|^2\frac{\|Ta\|^2}{\|Tb\|^2}\!+\!|\delta|^2}}\\
&=\!\frac{1-\frac{\|Ta\|^2}{\|Tb\|^2}}{\sqrt{\frac{\|Ta\|^2}{\|Tb\|^2}\!
+\!\frac{1}{|\theta|^2}}\,\sqrt{\frac{\|Ta\|^2}{\|Tb\|^2}\!+\!|\theta|^2}}\\
&\leq\frac{1-\frac{\|Ta\|^2}{\|Tb\|^2}}{\sqrt{\frac{\|Ta\|^4}{\|Tb\|^4}+1}}\leq
1-\frac{\|Ta\|^2}{\|Tb\|^2}\leq 1-\frac{[T]^2}{\|T\|^2}=\varepsilon_T,
\end{align*}
whence $|\langle Tx,Ty\rangle|\leq\varepsilon_T\!\|Tx\|\,\|Ty\|$.
Thus $Tx\bot^{\!\!\varepsilon_T} Ty$.
\end{proof}
The following result can be considered as an extension of
Theorem \ref{theorem-eps-T} (in some sense).
More precisely, we show that every injective operator
approximately preserves inner product.
\begin{theorem}\label{theorem-eps-T-gamma}
Assume that $\dim\mathscr{H}\! <\!\infty$. Suppose
that $T\!\in\! \mathbb{B}(\mathscr{H}, \mathscr{K})$ and $0\!<\![T]$. Then there
exists $\gamma$ such that $T$ satisfies
\begin{equation}\label{ineq-TxTy-gamma-xy-eps-T}
|\langle Tx,Ty\rangle-\gamma\langle x,y\rangle|
\leq \left(1-\frac{[T]^2}{\|T\|^2}\right)\,\|T\|^2\,\|x\|\,\|y\| \qquad (x, y\in\mathscr{H}).
\end{equation}
Moreover, $[T]^2\leq |\gamma|\leq 2\|T\|^2-[T]^2$.
\end{theorem}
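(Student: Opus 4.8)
The plan is to reduce everything to a statement about the single positive operator $T^{*}T\in\mathbb{B}(\mathscr{H})$. First I would observe that, since $\mathscr{H}$ is finite‑dimensional, $T^{*}T$ is a (bounded, self‑adjoint) positive operator and that for every unit vector $z\in\mathscr{H}$ one has $\langle T^{*}Tz,z\rangle=\|Tz\|^{2}$, which by the very definitions of $\|T\|$ and $[T]$ satisfies $[T]^{2}\le\|Tz\|^{2}\le\|T\|^{2}$; moreover, in finite dimensions these two bounds are actually attained at some unit vectors. The second elementary observation is the identity
$$\langle Tx,Ty\rangle-\gamma\langle x,y\rangle=\big\langle (T^{*}T-\gamma I)x,\,y\big\rangle\qquad(x,y\in\mathscr{H}),$$
valid for any real scalar $\gamma$, so that by the Cauchy--Schwarz inequality the desired estimate \eqref{ineq-TxTy-gamma-xy-eps-T} follows once we produce a $\gamma$ with $\|T^{*}T-\gamma I\|\le\bigl(1-\tfrac{[T]^{2}}{\|T\|^{2}}\bigr)\|T\|^{2}=\|T\|^{2}-[T]^{2}$ (here I would note $\|T\|>0$ because $[T]>0$).

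Next I would compute that operator norm. Since $T^{*}T-\gamma I$ is self‑adjoint, its norm equals $\sup_{\|z\|=1}\bigl|\langle(T^{*}T-\gamma I)z,z\rangle\bigr|=\sup_{\|z\|=1}\bigl|\,\|Tz\|^{2}-\gamma\,\bigr|$, and the quantity $\|Tz\|^{2}-\gamma$ ranges inside the interval $[\,[T]^{2}-\gamma,\ \|T\|^{2}-\gamma\,]$. Choosing $\gamma:=\|T\|^{2}$ makes this interval $[\,[T]^{2}-\|T\|^{2},\,0\,]$, so $\|T^{*}T-\gamma I\|=\|T\|^{2}-[T]^{2}$ exactly, which is precisely the bound needed; this proves \eqref{ineq-TxTy-gamma-xy-eps-T}. (I would remark that one could instead pick the midpoint $\gamma=\tfrac{[T]^{2}+\|T\|^{2}}{2}$, which even halves the right‑hand side, but the statement only asks for existence.)

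For the ``moreover'' part I would argue in two ways. With the explicit choice $\gamma=\|T\|^{2}$ one has $|\gamma|=\|T\|^{2}$, and $0<[T]^{2}\le\|T\|^{2}\le 2\|T\|^{2}-[T]^{2}$ since $[T]^{2}\le\|T\|^{2}$, giving the claimed two‑sided bound. In fact every admissible $\gamma$ must satisfy these bounds: inserting $x=y$ equal to a unit vector at which $\|Tx\|=\|T\|$ into \eqref{ineq-TxTy-gamma-xy-eps-T} forces $\gamma\ge[T]^{2}$, while inserting a unit vector at which $\|Tx\|=[T]$ forces $\gamma\le 2\|T\|^{2}-[T]^{2}$; since $[T]^{2}>0$ this also shows $\gamma=|\gamma|$, so $[T]^{2}\le|\gamma|\le 2\|T\|^{2}-[T]^{2}$.

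I do not expect a serious obstacle here: the proof only uses the quadratic‑form formula for the norm of a self‑adjoint operator, the definitions of $\|T\|$ and $[T]$, and (merely to turn suprema into attained maxima, and only for the ``moreover'' clause) the finite‑dimensionality of $\mathscr{H}$. The one point requiring a little care is the bookkeeping that identifies $\bigl(1-\tfrac{[T]^{2}}{\|T\|^{2}}\bigr)\|T\|^{2}$ with $\|T\|^{2}-[T]^{2}$ and keeps track of $\|T\|\neq0$; I would also note in passing that $\dim\mathscr{H}<\infty$ is not truly essential, since $\sigma(T^{*}T)\subseteq[[T]^{2},\|T\|^{2}]$ and the relevant norm formula hold over any real Hilbert space, with suprema in place of maxima.
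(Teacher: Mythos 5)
Your argument is correct, but it is genuinely different from (and more self-contained than) the one in the paper. The paper does not estimate $\|T^{*}T-\gamma I\|$ directly: it first proves (Theorem \ref{theorem-eps-T}) that $T$ is $\varepsilon_T$-orthogonality preserving with $\varepsilon_T=1-\frac{[T]^2}{\|T\|^2}$, and then invokes an external stability result \cite[Theorem 5.5]{Wo.4}, which converts approximate orthogonality preservation into the approximate inner-product identity \eqref{ineq-TxTy-gamma-xy-eps-T} with the constant $\varepsilon_T\|T\|^2$; only the ``moreover'' bounds on $\gamma$ are obtained as you obtain them, by substituting unit vectors $x=y=u$ and passing to the supremum and infimum of $\|Tu\|^2$. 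Your route --- writing $\langle Tx,Ty\rangle-\gamma\langle x,y\rangle=\langle (T^{*}T-\gamma I)x,y\rangle$, using the quadratic-form formula for the norm of a self-adjoint operator, and choosing $\gamma=\|T\|^2$ so that $\|T^{*}T-\gamma I\|\le\|T\|^2-[T]^2$ --- is elementary, avoids the external citation, produces an explicit $\gamma$, shows that finite-dimensionality is inessential for the main inequality, and even yields the sharper constant $\tfrac12(\|T\|^2-[T]^2)$ with the midpoint choice. What the paper's approach buys instead is the conceptual link to the approximately-orthogonality-preserving framework (the theorem is explicitly presented as an extension of Theorem \ref{theorem-eps-T}), at the cost of depending on \cite{Wo.4} and on $\dim\mathscr{H}<\infty$. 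One minor bookkeeping remark: in your ``every admissible $\gamma$'' argument, the single substitution of a norm-attaining unit vector with $\|Tx\|=\|T\|$ already yields both $[T]^2\le\gamma$ and $\gamma\le 2\|T\|^2-[T]^2$ (the vector attaining $[T]$ gives the stronger bound $\gamma\le\|T\|^2$), so the attribution of the two bounds to the two vectors could be streamlined; this does not affect correctness.
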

\begin{proof}
Combining Theorem \ref{theorem-eps-T} and \cite[Theorem 5.5]{Wo.4}, we
immediately get (\ref{ineq-TxTy-gamma-xy-eps-T}).
Fix $u\!\in\! \mathscr{H}$ such that $\|u\|\!=\!1$.
Putting $u$ in place of $x$ and $y$ in the above inequality in (\ref{ineq-TxTy-gamma-xy-eps-T}) we get
$|\, \|Tu\|^2\!-\!\gamma |\!\leq\! \left(1-\frac{[T]^2}{\|T\|^2}\right)\,\|T\|^2$.
Since $u$ was chosen as arbitrary unit vector, passing to the supremum and infimum over $\|u\|\!=\!1$, we
get $[T]^2\!\leq\! |\gamma|\!\leq\! 2\|T\|^2\!-\![T]^2$.
\end{proof}
To end this paper we show that
in the finite-dimensional case Corollary \ref{cr.4.5} can be strengthen  as follows.
Namely, as an immediate consequence of Corollary \ref{cr.4.5} and Theorem \ref{theorem-eps-T-gamma}, we
obtain the final result.
\begin{corollary}\label{theorem-eps-T-gamma-c}
Let $c\in (-1,1)$ and $\varepsilon\in[0, 1 + |c|)$.
Suppose that $T\in \mathbb{B}(\mathscr{H}, \mathscr{K})\setminus\{0\}$
be an $(\varepsilon, c)$-angle preserving mapping.
Assume that $\dim\mathscr{H}\! <\!\infty$.
Then there
exists $\gamma$ such that $T$ satisfies
\begin{center}
$|\langle Tx,Ty\rangle-\gamma\langle x,y\rangle|\leq \left(1-\frac{(1 + |c|)(1 - |c| - \varepsilon)}{(1 - |c|)
(1 + |c| + \varepsilon)}\right)\,\|T\|^2\,\|x\|\,\|y\| \qquad (x, y\in\mathscr{H})$.
\end{center}
Moreover, $[T]^2\leq |\gamma|\leq 2\|T\|^2-[T]^2$.
\end{corollary}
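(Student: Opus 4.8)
The plan is to simply chain together Corollary \ref{cr.4.5} and Theorem \ref{theorem-eps-T-gamma}. First I would invoke Corollary \ref{cr.4.5}: since $T\in\mathbb{B}(\mathscr{H},\mathscr{K})\setminus\{0\}$ is an $(\varepsilon,c)$-angle preserving mapping with $\varepsilon\in[0,1+|c|)$, that corollary tells us $T$ is injective and, by part (i),
$$\sqrt{\frac{(1 + |c|)(1 - |c| - \varepsilon)}{(1 - |c|)(1 + |c| + \varepsilon)}}\,\|T\| \leq [T].$$
In particular $[T]>0$ (and of course $\|T\|>0$ since $T\neq0$), so the hypotheses $0<[T]\leq\|T\|$ of Theorems \ref{theorem-eps-T} and \ref{theorem-eps-T-gamma} are met; the finite-dimensionality hypothesis is available by assumption.

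Next I would apply Theorem \ref{theorem-eps-T-gamma}: there exists $\gamma$ with
$$|\langle Tx,Ty\rangle-\gamma\langle x,y\rangle| \leq \left(1-\frac{[T]^2}{\|T\|^2}\right)\|T\|^2\,\|x\|\,\|y\| \qquad (x,y\in\mathscr{H}),$$
and $[T]^2\leq|\gamma|\leq 2\|T\|^2-[T]^2$. The last inequality chain is exactly the ``moreover'' claim of the corollary, so that part is already done. It remains only to weaken the constant $1-[T]^2/\|T\|^2$ to the stated one. Squaring the inequality from Corollary \ref{cr.4.5}(i) and dividing by $\|T\|^2$ gives
$$\frac{(1 + |c|)(1 - |c| - \varepsilon)}{(1 - |c|)(1 + |c| + \varepsilon)} \leq \frac{[T]^2}{\|T\|^2},$$
hence $1-\dfrac{[T]^2}{\|T\|^2}\leq 1-\dfrac{(1 + |c|)(1 - |c| - \varepsilon)}{(1 - |c|)(1 + |c| + \varepsilon)}$, and substituting this into the displayed estimate yields the required inequality.

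I do not expect any real obstacle here; the argument is a direct synthesis of two already-established results. The only points needing a line of care are: noting that $\|T\|>0$ so the quotients are well defined; checking that the weakening step preserves the direction of the inequality (it does, since both sides of the constant comparison are manipulated monotonically); and observing that $\varepsilon\in[0,1+|c|)$ guarantees $0\leq\frac{(1 + |c|)(1 - |c| - \varepsilon)}{(1 - |c|)(1 + |c| + \varepsilon)}$ when $\varepsilon\le 1-|c|$ and, more importantly, that the bracketed factor $1-\frac{(1 + |c|)(1 - |c| - \varepsilon)}{(1 - |c|)(1 + |c| + \varepsilon)}$ is nonnegative so that it is a legitimate error constant — this is immediate from the comparison above since $[T]^2/\|T\|^2\le 1$.
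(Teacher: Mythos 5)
Your proposal follows essentially the same route as the paper: Corollary \ref{cr.4.5} for injectivity, then Theorem \ref{theorem-eps-T-gamma}, then the monotone weakening of the constant via Corollary \ref{cr.4.5}(i); the paper simply leaves the last weakening step implicit. One small correction: you derive $[T]>0$ from Corollary \ref{cr.4.5}(i), but that inequality is only informative when $\varepsilon<1-|c|$ — for $\varepsilon\in[1-|c|,1+|c|)$ the radicand $\tfrac{(1+|c|)(1-|c|-\varepsilon)}{(1-|c|)(1+|c|+\varepsilon)}$ is nonpositive, so (i) degenerates to $0\leq[T]$ and gives nothing. The correct source of $[T]>0$, and the one the paper uses, is the combination of injectivity (from Corollary \ref{cr.4.5}) with $\dim\mathscr{H}<\infty$: the continuous map $x\mapsto\|Tx\|$ attains a positive minimum on the compact unit sphere. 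Your weakening of the constant is unaffected by this, since the squared form of Corollary \ref{cr.4.5}(i), namely $\tfrac{(1+|c|)(1-|c|-\varepsilon)}{(1-|c|)(1+|c|+\varepsilon)}\leq\tfrac{[T]^2}{\|T\|^2}$, holds trivially when its left-hand side is nonpositive.
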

\begin{proof}
It follows from Corollary \ref{cr.4.5} that $T$ is injective. Since $\dim\mathscr{H}\! <\!\infty$, we have
$[T]>0$. From Theorem \ref{theorem-eps-T-gamma} we
have the desired assertion.
\end{proof}
 
\textbf{Acknowledgement.}
The first author is partially supported by a grant from Ferdowsi University of Mashhad (No. 2/47884).

\bibliographystyle{amsplain}

\end{document}